\DeclareSymbolFont{bbold}{U}{bbold}{m}{n}
\DeclareSymbolFontAlphabet{\mathbbold}{bbold}
\newcommand{\N}{\mathbb{N}}
\newcommand{\R}{\mathbb{R}}
\newcommand{\1}{\mathbbold{1}}
\renewcommand{\epsilon}{\varepsilon}
\newcommand{\fin}{\rm fin}
\providecommand{\form}{\tau}
\providecommand{\scpr}[2]{\left( #1 \,\middle|\, #2 \right)}
\providecommand{\dupa}[2]{\left\langle #1 \,,\, #2 \right\rangle}
\renewcommand{\sp}{\scpr}
\newcommand{\from}{\colon}
\let\phi\varphi
\let\leq\leqslant
\let\geq\geqslant
\def\@row#1,{#1\@ifnextchar;{\@gobble}{&\@row}}
\def\@matrix{%
    \expandafter\@row\my@arg,;%
    \@ifnextchar({\\ \get@in@paren{\@matrix}}{\after@matrix}%
    }
\def\matrixtype#1#2#3{%
    \ifmmode\def\after@matrix{\end{#2}\right#3}%
    \else\def\after@matrix{\end{#2}\right#3$}$\fi\iffalse$\fi
    \left#1\begin{#2}\get@in@paren{\@matrix}%
    }
\def\@column#1,{#1\@ifnextchar;{\@gobble}{\\ \@column}}
\newcommand\vect{}
\def\svect(#1){\left(\begin{smallmatrix}\@column#1,;\end{smallmatrix}\right)}
\def\vect{\get@in@paren{\@vect}}
\def\@vect{\left(\begin{matrix}\expandafter\@column\my@arg,;\end{matrix}\right)}
\def\get@in@paren#1({\def\my@arg{}\def\my@rest{}\def\after@get{#1}\get@arg}
\let\e@a\expandafter
\def\get@arg#1){\e@a\kl@test\my@rest#1(;}
\def\kl@test#1(#2;{\e@a\def\e@a\my@arg\e@a{\my@arg#1}%
                   \ifx:#2:\let\my@exec\after@get
                   \else\let\my@exec\get@arg
                        \e@a\def\e@a\my@arg\e@a{\my@arg(}%
                        \def@rest#2;%
                   \fi\my@exec}
\def\def@rest#1(;{\def\my@rest{#1\kl@zu}}
\def\kl@zu{)}
\newcommand\MyPairedDelimiter{%
  \@ifstar{\My@Paired@Delimiter{{}}}
          {\My@Paired@Delimiter{}}%
}
\newcommand\My@Paired@Delimiter[4]{%
  \newcommand#2{%
    \@ifstar{\start@PD{#1}{\delimitershortfall=-1sp}{#3}{#4}}
            {\start@PD{#1}{}{#3}{#4}}%
  }%
}
\newcommand\start@PD[5]{%
  #1\mathopen{\mathpalette\put@delim@helper{\put@delim{#2}{#3}{.}{#5}}}%
  #5%
  \mathclose{\mathpalette\put@delim@helper{\put@delim{#2}{.}{#4}{#5}}}%
}
\newcommand\put@delim@helper[2]{%
  \hbox{$\m@th\nulldelimiterspace=0pt #2#1$}%
}
\newcommand\put@delim[5]{%
  \setbox\z@\hbox{$\m@th#5{#4}$}%
  \setbox\tw@\null
  \ht\tw@\ht\z@ \dp\tw@\dp\z@
  #1#5%
  \left#2\box\tw@\right#3%
}
\MyPairedDelimiter*{\abs}{\lvert}{\rvert}
\MyPairedDelimiter*{\norm}{\lVert}{\rVert}
\MyPairedDelimiter{\set}{\{}{\}}
 \theoremstyle{plain} % default
  \newtheorem{theorem}{Theorem}[section]
  \newtheorem{corollary}[theorem]{Corollary}
  \newtheorem{lemma}[theorem]{Lemma}
  \theoremstyle{definition}
  \newtheorem{case}[theorem]{Case}
  \newtheorem*{definition}{Definition}
  \newtheorem{remark}[theorem]{Remark}
\begin{document}

\medmuskip=4mu plus 2mu minus 3mu
\thickmuskip=5mu plus 3mu minus 1mu
\belowdisplayshortskip=9pt plus 3pt minus 5pt

\title{Perturbations of positive semigroups on $L_p$-spaces}

\author{Christian Seifert \and Marcus Waurick}

% \institute{
% Christian Seifert \at 
% Technische Universit\"at Hamburg-Harburg,
% Institut f\"ur Mathematik,
% 21073 Hamburg, Germany,
% \email{christian.seifert@tuhh.de}
% \and
% Marcus Waurick \at
% Technische Universit\"at Dresden,
% Fachrichtung Mathematik,
% 01062 Dresden, Germany,
% \email{marcus.waurick@tu-dresden.de}
% }

\date{}

\maketitle

\begin{abstract}
 We give a characterization of a variation of constants type estimate relating two positive semigroups on (possibly different) $L_p$-spaces to one another in terms of corresponding estimates for the respective generators and of estimates for the respective resolvents.
 The results have applications to kernel estimates for semigroups induced by accretive and non-local forms on $\sigma$-finite measure spaces. 
\end{abstract}

%\keywords{
Keywords:
positive $C_0$-semigroups \and kernel estimates \and perturbed semigroups
%}

%\subclass{
MSC2010:
35B09 \and 35B20 \and 35K08
%}

\section{Introduction}

In this note, we shall further investigate a result recently obtained in \cite{SeifertWingert2014}.
In that article, the authors gave a semigroup-theoretic proof of a perturbation result obtained in \cite[Lemma 3.1]{BarlowGrigoryanKumagai2009}, 
which was subsequently used to derive heat kernel estimates for non-local Dirchlet forms, see 
\cite{BarlowBassChenKassmann2009,ChenKumagai2008,Foondun2009,GrigoryanHu2008}.
The aim of the present paper is to state a more general version of the perturbation 
result in \cite{SeifertWingert2014}. As a corollary, we recover the main result of \cite{SeifertWingert2014} as a 
special case. As it turns out, the generalization obtained here leads to a  
characterization of a weak formulation of a variations of constants type 
estimate for positive semigroups.

To be more precise, let $(\Omega,\mathcal{F},m)$ be a localizable measure space,
$1\leq p,q<\infty$ and $S$ and $T$ be positive $C_0$-semigroups on $L_2(m)$ (over $\R$).
Assuming that $S$ also acts on $L_p(m)$ and $T$ also acts on $L_q(m)$ we will characterize the validity of the (scalar) estimate
\[\dupa{T(t) u}{v'}_{2,2} \leq \dupa{S(t) u}{v'}_{2,2} + C \int_0^t \dupa{T(t-s)u}{g}_{q,q'} \dupa{f}{S(s)'v'}_{p,p'} \,ds \quad (t\geq 0)\]
for suitable positive elements $u,f,g,v'$ by means of the generators of the two semigroups, and also by their resolvents. Here, 
\[\dupa{\varphi}{\psi}_{p,p'}:=\int_\Omega \varphi\psi\,dm\]
denotes the dual pairing of $\varphi\in L_p(m)$ and $\psi\in L_{p'}(m) = L_p(m)'$ with $p' = \frac{p}{p-1}$ the dual exponent.
We shall also provide a strong formulation of this characterization, which can be viewed as a perturbation result for positive semigroups on $L_p$-spaces.

The results obtained have applications to kernel estimates for positive semigroups acting on $L_p$-spaces,
and can be used to derive heat kernel estimates (and in some cases also the existence of a kernel) for semigroups induced by Dirichlet forms perturbed by non-local parts. 
This leads to a further generalization of the heat kernel estimates obtained in \cite{BarlowGrigoryanKumagai2009}. 
In fact, for some measure space $(\Omega, \mathcal{F}, m)$, consider a closed accretive form $\tau_0$ on $L_2(m)$ such that the functions of finite support are dense in $D(\form_0)$ (see below for precise definitions).
Let $\form_0$ induce a positive and $L_\infty$-contractive (i.e.\ submarkovian) semigroup $S$. Perturbing $\tau_0$ additively by $\tau_j$ given by
 \[
    \tau_j(u)\coloneqq \int_{\Omega\times\Omega} (u(x)-u(y))^2j(x,y)dm^2(x,y)
 \]
for some measurable $j\colon \Omega\times \Omega \to [0,\infty)$ satisfying certain integrability assumptions,
we obtain kernel estimates for the semigroup induced by $\tau\coloneqq \tau_0 + \tau_j$ in terms of the kernel of $S$
for $j$ belonging to a sum of $L_p$-spaces in a sense to be specified below.
Established results, see \cite{BarlowGrigoryanKumagai2009,SeifertWingert2014}, work for bounded $j$ and regular Dirichlet forms $\tau_0$.

The paper is organised as follows.
In Section \ref{sec:conv}, we provide a technical lemma (Lemma \ref{lem:convolutions}) to be used in the main theorem of this article (Theorem \ref{thm:main}).
As it turns out, this lemma can be viewed to describe a behavior of certain convolutions and, thus, may be of independent interest.
Section \ref{sec:main_pos_semi} introduces the framework and contains the statement of our main theorem, and we also state some variants of it.
In Section \ref{sec:kernel_estimate} we focus on kernel estimates assuming that both semigroups have kernels. There we also show existence of a kernel for $T$ in the case $q=1$.
The concluding Section \ref{sec:application_forms} deals with an application to heat kernel estimates for non-local forms. 

\section{An auxiliary result}\label{sec:conv}

In this section we provide a key lemma needed in the proof of our main theorem.

\begin{lemma}
\label{lem:convolutions}
  For $n\in\N$ let $\varphi_n,\psi_n\from[0,\infty)\times \N_0\to \R$ be 
  continuous in the first variable, $\varphi,\psi\from [0,\infty)\to\R$,  with $\varphi_n(\cdot,n)\to \varphi$ uniformly on compacts, $\psi_n(\cdot,n)\to \psi$ uniformly on compacts.
  Assume $\varphi_n(t,l) = \varphi_l(t\cdot\frac{l}{n},l)$ for all $n\in\N,l\in\N,t\geq0$, and similarly for $\psi_n$. Then
  \[\frac{1}{n-1}\sum_{l=1}^{n-1} \varphi_{n-1}(t,n-l)\psi_{n-1}(t,l)\to \int_0^1 \varphi(t(1-s))\psi(ts)\,ds = \int_0^t \varphi(t-s)\psi(s)\,ds \quad(t\geq0).\]
\end{lemma}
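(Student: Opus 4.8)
The plan is to fix $t>0$ (for $t=0$ both sides vanish) and to unwind the scaling hypothesis so as to expose a Riemann sum. Applying $\varphi_n(t,l)=\varphi_l(\tfrac{l}{n}t,l)$ with $(n,l)\mapsto(n-1,n-l)$ and $\psi_n(t,l)=\psi_l(\tfrac{l}{n}t,l)$ with $(n,l)\mapsto(n-1,l)$ gives
\[
  \varphi_{n-1}(t,n-l)=\varphi_{n-l}\!\left(\tfrac{n-l}{n-1}t,\,n-l\right),\qquad
  \psi_{n-1}(t,l)=\psi_{l}\!\left(\tfrac{l}{n-1}t,\,l\right),
\]
so the left-hand side equals $S_n:=\tfrac{1}{n-1}\sum_{l=1}^{n-1}\varphi_{n-l}(\tfrac{n-l}{n-1}t,n-l)\,\psi_l(\tfrac{l}{n-1}t,l)$. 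With $s_l:=\tfrac{l}{n-1}$ the nodes $s_l$ form a partition of $(0,1]$ of mesh $\tfrac{1}{n-1}$. I would replace the diagonal functions by their limits, i.e.\ compare $S_n$ with $R_n:=\tfrac{1}{n-1}\sum_{l=1}^{n-1}\varphi(\tfrac{n-l}{n-1}t)\,\psi(\tfrac{l}{n-1}t)$, keeping the same arguments, and prove $S_n-R_n\to0$ and $R_n\to\int_0^1\varphi((1-s)t)\psi(st)\,ds$ separately.

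First I would record a uniform bound. Since $\varphi,\psi$ are uniform-on-compacts limits of continuous functions they are continuous, hence bounded on $[0,t]$; and as $\varphi_k(\cdot,k)\to\varphi$ uniformly on $[0,t]$, the diagonals are eventually within $1$ of $\varphi$ while the remaining finitely many are continuous on the compact $[0,t]$, so $M:=\sup_k\sup_{[0,t]}\abs{\varphi_k(\cdot,k)}<\infty$ and likewise $M':=\sup_k\sup_{[0,t]}\abs{\psi_k(\cdot,k)}<\infty$; in particular $\abs{\varphi}\le M$ and $\abs{\psi}\le M'$ on $[0,t]$. Every time-argument occurring in $S_n$ and $R_n$ lies in $[0,t]$, so these bounds apply to every summand.

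The core of the argument, and the step I expect to be the main obstacle, is $S_n-R_n\to0$: the hypothesis controls the diagonals $\varphi_k(\cdot,k)$, $\psi_k(\cdot,k)$ only for \emph{large} $k$, whereas the sum contains terms with small index $n-l$ or small index $l$, which are governed only by boundedness. Given $\epsilon>0$, I would fix $K\in\N$ with $\sup_{[0,t]}\abs{\varphi_k(\cdot,k)-\varphi}<\epsilon$ and $\sup_{[0,t]}\abs{\psi_k(\cdot,k)-\psi}<\epsilon$ for all $k>K$, and split the index set into the tails $\{l\le K\}\cup\{n-l\le K\}$ and the middle $\{K<l<n-K\}$. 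Each tail has at most $K$ terms, each of modulus at most $2MM'$, so the tails contribute at most $4KMM'/(n-1)$, which tends to $0$ for fixed $K$. On the middle both $n-l$ and $l$ exceed $K$, and the telescoping $ab-cd=(a-c)b+c(b-d)$ — applied with the common argument $\tfrac{n-l}{n-1}t$ in the first factors and $\tfrac{l}{n-1}t$ in the second — gives, using $\abs{a-c}<\epsilon$, $\abs{b-d}<\epsilon$, $\abs{b}\le M'$, $\abs{c}\le M$, the per-term bound $\epsilon(M+M')$; hence the middle contributes at most $\epsilon(M+M')$. Letting $n\to\infty$ and then $\epsilon\to0$ yields $S_n-R_n\to0$.

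It remains to treat $R_n=\tfrac{1}{n-1}\sum_{l=1}^{n-1}\varphi(\tfrac{n-l}{n-1}t)\psi(s_l t)$, where $\tfrac{n-l}{n-1}=1+\tfrac{1}{n-1}-s_l$. By uniform continuity of $\varphi$ on $[0,t]$ the shift $\tfrac{1}{n-1}t$ in its argument may be removed with an error uniform in $l$ that tends to $0$, so $R_n$ has the same limit as the genuine Riemann sum $\tfrac{1}{n-1}\sum_{l=1}^{n-1}\varphi((1-s_l)t)\psi(s_l t)$ of the continuous integrand $s\mapsto\varphi((1-s)t)\psi(st)$ over the partition $(s_l)$ of $(0,1]$. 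Hence $R_n\to\int_0^1\varphi((1-s)t)\psi(st)\,ds$, and the substitution $s\mapsto ts$ rewrites this as the convolution $\int_0^t\varphi(t-s)\psi(s)\,ds$ appearing in the statement. Combining the three steps gives the claim.
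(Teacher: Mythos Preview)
Your argument is correct and follows the same route as the paper: unwind the scaling hypothesis to reduce to the diagonals $\varphi_k(\cdot,k),\psi_k(\cdot,k)$, split off the boundary indices where one of $l$ or $n-l$ is small (the paper does this by restricting to $[a,b]\subset(0,1)$ after rescaling), use the uniform convergence on the bulk together with the telescoping $ab-cd=(a-c)b+c(b-d)$, and then pass to a Riemann sum after removing a small shift via uniform continuity --- the paper merely phrases the Riemann-sum step as weak convergence of the discrete measures $\nu_n$ to Lebesgue measure. One cosmetic slip worth flagging: the substitution $s\mapsto ts$ actually gives $\tfrac{1}{t}\int_0^t\varphi(t-s)\psi(s)\,ds$, so the displayed equality in the lemma is off by a factor of $t$ (this factor is correctly supplied in the application in Theorem~\ref{thm:main}), and correspondingly at $t=0$ the limit is $\varphi(0)\psi(0)$, not~$0$.
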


\begin{proof}
  For $n\geq 2$ define $\mu_n:=\frac{1}{n-1} \sum_{l=1}^{n-1}\delta_l$, $\delta_l$ the Dirac measure at $l$, and $\nu_n:= \mu_n(n\cdot)$. Then, for $\lambda$ denoting the Lebesgue measure, $\nu_n\to \lambda$ weakly on $(0,1)$.
  Let $t\geq 0$. We compute
  \begin{align*}
  \frac{1}{n-1}\sum_{l=1}^{n-1} \varphi_{n-1}(t,n-l)\psi_{n-1}(t,l) & = \int_0^{n} \varphi_{n-1}(t,n-l)\psi_{n-1}(t,l)\,d\mu_n(l) \\
   & = \int_0^1 \varphi_{n-1}(t,n(1-l))\psi_{n-1}(t,nl)\,d\nu_n(l) \\
   & = \int_0^1 \varphi_{n(1-l)}(t\tfrac{n}{n-1}(1-l),n(1-l)) \psi_{nl}(t\tfrac{n}{n-1}l,nl)\,d\nu_n(l).
  \end{align*}
  Let $\varepsilon>0$, $K:=[0,2t]$. Then there exist $0<a<b<1$ such that
  \[\sup_{n\in\N}\abs{\bigg(\int_{0}^a + \int_b^1\bigg)\varphi_{n(1-l)}(t\tfrac{n}{n-1}(1-l),n(1-l)) \psi_{nl}(t\tfrac{n}{n-1}l,nl)\,d\nu_n(l)} \leq \varepsilon.\]
  Thus, it suffices to show that
  \[\int_a^b \varphi_{n(1-l)}(t\tfrac{n}{n-1}(1-l),n(1-l)) \psi_{nl}(t\tfrac{n}{n-1}l,nl)\,d\nu_n(l) \to \int_a^b \varphi(t(1-s))\psi(ts)\,ds.\]
  There exists $N\in\N$ such that for all $n\geq N$ we have
  \[\norm{\varphi_n(\cdot,n)-\varphi}_{\infty,K},\norm{\psi_n(\cdot,n)-\psi}_{\infty,K}\leq \varepsilon,\]
  \[\abs{\int_0^1 \varphi(t(1-s)\psi(ts)\,d(\nu_n(s)-\lambda(s))} \leq \varepsilon,\]
  and
  \[\sup_{l\in[0,1]} \abs{\varphi(t\tfrac{n}{n-1}(1-l))\psi(t\tfrac{n}{n-1}l) - \varphi(t(1-l))\psi(tl)} \leq \varepsilon\]
  since $\varphi$ and $\psi$ are uniformly continuous on $K$.
  For $n\geq N\max\{(1-a)^{-1},(1-b)^{-1},a^{-1},b^{-1}\}\geq N$ we obtain
  \begin{align*}
    & \abs{\int_a^b \varphi_{n(1-l)}(t\tfrac{n}{n-1}(1-l),n(1-l)) \psi_{nl}(t\tfrac{n}{n-1}l,nl)\,d\nu_n(l)  - \int_a^b \varphi(t(1-s))\psi(ts)\,ds} \\
    & \leq \int_a^b \abs{ \varphi_{n(1-l)}(t\tfrac{n}{n-1}(1-l),n(1-l)) \psi_{nl}(t\tfrac{n}{n-1}l,nl) - \varphi(t\tfrac{n}{n-1}(1-l))\psi(t\tfrac{n}{n-1}l)}\,d\nu_n(l) \\
    & \quad + \int_a^b\abs{\varphi(t\tfrac{n}{n-1}(1-l))\psi(t\tfrac{n}{n-1}l) - \varphi(t(1-l))\psi(tl)}\, d\nu_n(l) \\
    & \quad + \abs{\int_a^b \varphi(t(1-s))\psi(ts)\,d(\nu_n(s)-\lambda(s))} \\
    & \leq \int_a^b \abs{ \varphi_{n(1-l)}(t\tfrac{n}{n-1}(1-l),n(1-l)) - \varphi(t\tfrac{n}{n-1}(1-l))}\abs{ \psi_{nl}(t\tfrac{n}{n-1}l,nl)} \, d\nu_n(l) \\
    & \quad + \int_a^b \abs{\varphi(t\tfrac{n}{n-1}(1-l))}\abs{\psi_{nl}(t\tfrac{n}{n-1}l,nl)-\psi(t\tfrac{n}{n-1}l)} \,d\nu_n(l) + \varepsilon + \varepsilon \\
    & \leq \bigl(\sup_{n\in\N, s\in K} \abs{\psi_n(s,n)} + \sup_{s\in K}\abs{\varphi(s)}\bigr)\varepsilon + 2\varepsilon.
  \end{align*}\qed
\end{proof}

\section{A perturbation result}\label{sec:main_pos_semi}

We start by recalling the definition of a localizable measure space.
\begin{definition}
  Let $(\Omega,\mathcal{F},m)$ be a measure space. Then $(\Omega,\mathcal{F},m)$ is called \emph{semifinite} if for all $A\in \mathcal{F}$ with $m(A) = \infty$ there exists $B\in\mathcal{F}$ such that $B\subseteq A$ and $0<m(B)<\infty$.
  Furthermore, we say that $(\Omega,\mathcal{F},m)$ is \emph{localizable} if it is semifinite and for all $\mathcal{E}\subseteq \mathcal{F}$ there exists $A\in\mathcal{F}$ such that
  \begin{enumerate}
    \item $m(E\setminus A) = 0$ for all $E\in\mathcal{E}$,
    \item If $B\in \mathcal{F}$ and $m(E\setminus B)=0$ for all $E\in\mathcal{E}$, then $m(A\setminus B) = 0$.
  \end{enumerate}
\end{definition}
Note that every $\sigma$-finite measure space is localizable, and that a measure space is localizable if and only if $L_1(m)'=L_\infty(m)$ (with the canonical embedding), see e.g.~\cite{Segal1951,Fremlin2001}.

Let $(\Omega,\mathcal{F},m)$ be a localizable measure space, $1\leq p,q<\infty$. Let $p'$ be the dual exponent defined by $\frac{1}{p}+\frac{1}{p'} = 1$ (where $\frac{1}{\infty}:=0$), and similarly for $q$.
For $f\in L_p(m)$, $g\in L_{p'}(m)$ ($=L_p(m)'$) we write
\[\dupa{f}{g}_{p,p'} = \int_\Omega f g\,dm\]
for the dual pairing.
All vector spaces appearing will be real-valued and over the field $\R$.

\begin{lemma}
\label{lem:conv_int}
  Let $S$ be a $C_0$-semigroup on $L_p(m)$, $T$ a $C_0$-semigroup on $L_q(m)$. Let $f\in L_p(m)$, $v'\in L_{p'}(m)$, $u\in L_q(m)$, $g'\in L_{q'}(m)$. Then
  \[\frac{1}{t}\int_0^t \dupa{T(t-s)u}{g'}_{q,q'} \dupa{f}{S(s)'v'}_{p,p'}\,ds \to \dupa{u}{g'}_{q,q'}\dupa{f}{v'}_{p,p'} \quad(t\to 0).\]
\end{lemma}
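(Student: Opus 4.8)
The plan is to reduce the statement to the elementary fact that the average of a product of two continuous scalar functions over a shrinking interval converges to the product of their values at the left endpoint. To this end, set
\[
\alpha(s) := \dupa{T(s)u}{g'}_{q,q'}, \qquad \beta(s) := \dupa{f}{S(s)'v'}_{p,p'} \quad (s \ge 0),
\]
so that the integrand equals $\alpha(t-s)\beta(s)$ and the asserted limit is precisely $\alpha(0)\beta(0) = \dupa{u}{g'}_{q,q'}\dupa{f}{v'}_{p,p'}$.

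First I would observe that both $\alpha$ and $\beta$ are continuous on $[0,\infty)$. For $\alpha$ this is immediate from strong continuity of $T$ on $L_q(m)$ together with continuity of the pairing against the fixed element $g' \in L_{q'}(m)$. For $\beta$ a little care is needed, since the adjoint family $S(\cdot)'$ need not be strongly continuous on $L_{p'}(m)$; I would therefore transfer the adjoint onto $f$, writing $\beta(s) = \dupa{S(s)f}{v'}_{p,p'}$. Continuity of $\beta$ then follows from strong continuity of $S$ on $L_p(m)$ and continuity of the pairing against $v' \in L_{p'}(m)$.

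Next I would rescale the integration variable by $s = t\sigma$, which turns the average into
\[
\frac{1}{t}\int_0^t \alpha(t-s)\beta(s)\,ds = \int_0^1 \alpha(t(1-\sigma))\beta(t\sigma)\,d\sigma.
\]
As $t \to 0$ the integrand converges pointwise to $\alpha(0)\beta(0)$ by continuity of $\alpha$ and $\beta$ at the origin. Restricting to $t \le 1$, both arguments $t(1-\sigma)$ and $t\sigma$ lie in $[0,1]$, on which the continuous functions $\alpha$ and $\beta$ are bounded; hence the integrand is dominated by the constant $\norm{\alpha}_{\infty,[0,1]}\,\norm{\beta}_{\infty,[0,1]}$, which is integrable over $[0,1]$. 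Dominated convergence then yields the value $\int_0^1 \alpha(0)\beta(0)\,d\sigma = \alpha(0)\beta(0)$, which is the claim.

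There is no genuine obstacle in this argument: the only point requiring attention is the continuity of $\beta$, which hinges on moving the adjoint onto $f$ so as to avoid invoking strong continuity of the (possibly non-$C_0$) adjoint semigroup. Once both factors are known to be continuous and locally bounded, the convergence is a routine application of dominated convergence; alternatively one could give a direct $\varepsilon$-estimate using uniform continuity of $\alpha$ and $\beta$ on $[0,1]$, exactly as in the treatment of $\varphi$ and $\psi$ in Lemma \ref{lem:convolutions}.
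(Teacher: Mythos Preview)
Your proof is correct and rests on the same two facts the paper uses: continuity of $s\mapsto\dupa{T(s)u}{g'}_{q,q'}$ (from strong continuity of $T$) and of $s\mapsto\dupa{f}{S(s)'v'}_{p,p'}$ (the paper phrases this as weak${}^*$ continuity of $S(\cdot)'$, which is exactly your move of writing $\dupa{S(s)f}{v'}_{p,p'}$). The only difference is in execution: the paper subtracts and adds $\dupa{u}{g'}_{q,q'}\dupa{f}{v'}_{p,p'}$ inside the integral and bounds the two resulting error terms directly, whereas you rescale to $[0,1]$ and apply dominated convergence---a cosmetic variation, not a different route.
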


\begin{proof}
  We compute
  \begin{align*}
    & \frac{1}{t}\int_0^t \dupa{T(t-s)u}{g'}_{q,q'} \dupa{f}{S(s)'v'}_{p,p'}\,ds - \dupa{u}{g'}_{q,q'}\dupa{f}{v'}_{p,p'} \\
    & = \frac{1}{t}\int_0^t \dupa{T(t-s)u}{g'}_{q,q'} \dupa{f}{(S(s)'-I)v'}_{p,p'}\, ds\\
    & \quad + \frac{1}{t}\int_0^t \dupa{(T(t-s)-I)u}{g'}_{q,q'}\dupa{f}{v'}_{p,p'} \,ds.
  \end{align*}
  Since $T$ is locally uniformly bounded and $S(\cdot)'$ is weakly$^*$ continuous, the first term on the right-hand side tends to zero as $t\to 0$. Since $T$ is strongly continuous the second term on the right-hand side tends also to zero.
  Thus, the assertion follows.\qed
\end{proof}

Let $T$ be a $C_0$-semigroup on $L_p(m)$ with generator $A$. Assume there exist $M\geq 1$, $\omega\in\R$ such that
\[\norm{T(t)u}_q\leq Me^{\omega t}\norm{u}_q \quad(u\in L_p\cap L_q(m), t\geq 0).\]
Since $L_p\cap L_q(m)$ is dense in $L_q(m)$ we can thus extend $T$ to a $C_0$-semigroup $T_q$ on $L_q(m)$. We will then say that \emph{$T$ also acts on $L_q(m)$}.
Let $A_q$ be the generator of $T_q$. Then $A_q u = A u$ for all $u\in L_p\cap L_q(m)$.

For formulating our main theorem, we shall need the following notion from semigroup theory, see e.g.~\cite[p 61]{EngelNagel}.
\begin{definition}
 Let $X$ be a Banach space, $T$ a $C_0$-semigroup on $X$ with generator $A$. Then $A^\sigma$ given by
 \[
    A^\sigma \coloneqq \left\{ (x',y')\in X'\times X'; \text{w}^*\text{-}\lim_{t\downarrow 0} \frac{1}{t}(T(t)'x'-x')=y'\right\}.
 \]
 is called the \emph{weak${}^*$-generator} of $T(\cdot)'$. 
\end{definition}

\begin{remark}\label{rem:Engel_Nagel_dual} It can be shown that $A^\sigma=A'$, the dual operator of the generator $A$ of $T$, see e.g.~\cite[p 61]{EngelNagel}. If $X$ is a Banach lattice, then so is $X'$ and if $(\lambda-A)^{-1}$ is positive for some real $\lambda\in \rho(A)$, then so is $(\lambda-A')^{-1}=\left((\lambda-A)^{-1}\right)'$. Note that, in general, $A'$ need not be densely defined anymore.
\end{remark}

\begin{theorem}\label{thm:main} Let $S,T$ be positive $C_0$-semigroups on $L_2(m)$ with generators $A_S, A_T$, respectively. Assume that $S$ also acts on $L_p(m)$ and $T$ also acts on $L_q(m)$. 
  Let $0\leq f\in L_p(m)$, $0\leq g'\in L_{p'}(m)$, $C\geq 0$.
  Then the following are equivalent:
  \begin{enumerate}
    \item
      For $t\geq 0$ and $0\leq u\in L_2\cap L_q(m)$, $0\leq v'\in L_2\cap L_{p'}(m)$ we have
      \[\dupa{T(t) u}{v'}_{2,2} \leq \dupa{S(t) u}{v'}_{2,2} + C\int_0^t \dupa{T(t-s)u}{g'}_{q,q'} \dupa{f}{S(s)'v'}_{p,p'}\, ds.\]
    \item
      For all $0\leq u\in D(A_T)\cap L_q(m)$, $0\leq v'\in D(A_S^{\sigma})\cap L_{p'}(m)$ we have
      \[\dupa{A_Tu}{v'}_{2,2} \leq \dupa{u}{A_S^{\sigma} v'}_{2,2} + C \dupa{u}{g'}_{q,q'}\dupa{f}{v'}_{p,p'}.\]
    \item
      For $\lambda\in\R$ sufficiently large and $0\leq u\in L_2\cap L_q(m)$, $0\leq v'\in L_2\cap L_{p'}(m)$ we have
      \[\dupa{(\lambda-A_T)^{-1} u}{v'}_{2,2} \leq \dupa{(\lambda-A_S)^{-1} u}{v'}_{2,2} + C\dupa{(\lambda-A_T)^{-1} u}{g'}_{q,q'} \dupa{f}{(\lambda-A_S^{\sigma})^{-1}v'}_{p,p'}.\]
  \end{enumerate}
\end{theorem}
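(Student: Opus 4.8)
The plan is to prove the cyclic chain $(a)\Rightarrow(b)\Rightarrow(c)\Rightarrow(a)$, viewing (b) and (c) as the infinitesimal (at $t=0$, resp.\ resolvent) forms of the integrated estimate (a). Throughout I would use Remark~\ref{rem:Engel_Nagel_dual}, namely $(\lambda-A_S^\sigma)^{-1}=\bigl((\lambda-A_S)^{-1}\bigr)'$, so that $\dupa{(\lambda-A_S)^{-1}u}{v'}_{2,2}=\dupa{u}{(\lambda-A_S^\sigma)^{-1}v'}_{2,2}$, and positivity of $(\lambda-A_T)^{-1}$, $(\lambda-A_S)^{-1}$ and its dual for real $\lambda$ large. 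Since $S$ acts on $L_p(m)$ and $T$ on $L_q(m)$, the $L_2$- and $L_p$- (resp.\ $L_q$-) resolvents agree on the intersections, so all mixed pairings below are well defined and the resolvents preserve the relevant $L_r$-spaces.

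For the two infinitesimal directions: to obtain $(a)\Rightarrow(b)$ fix $0\le u\in D(A_T)\cap L_q(m)$ and $0\le v'\in D(A_S^\sigma)\cap L_{p'}(m)$, which lie in $L_2\cap L_q(m)$ resp.\ $L_2\cap L_{p'}(m)$ because $D(A_T),D(A_S^\sigma)\subseteq L_2(m)$; rearrange (a) as $\dupa{T(t)u-u}{v'}_{2,2}\le\dupa{u}{S(t)'v'-v'}_{2,2}+C\int_0^t\dupa{T(t-s)u}{g'}_{q,q'}\dupa{f}{S(s)'v'}_{p,p'}\,ds$, divide by $t>0$ and let $t\downarrow0$. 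Strong continuity of $T$ and the definition of the weak${}^*$-generator give the limit $\dupa{A_Tu}{v'}_{2,2}-\dupa{u}{A_S^\sigma v'}_{2,2}$ on the left, while Lemma~\ref{lem:conv_int} gives $C\dupa{u}{g'}_{q,q'}\dupa{f}{v'}_{p,p'}$ on the right, which is (b). For $(b)\Rightarrow(c)$ fix $\lambda$ large and positive $u\in L_2\cap L_q(m)$, $v'\in L_2\cap L_{p'}(m)$, and set $\tilde u:=(\lambda-A_T)^{-1}u\ge0$, $\tilde v':=(\lambda-A_S^\sigma)^{-1}v'\ge0$ (in the admissible domains by the standing facts); applying (b) to $\tilde u,\tilde v'$ and substituting $A_T\tilde u=\lambda\tilde u-u$, $A_S^\sigma\tilde v'=\lambda\tilde v'-v'$, the two copies of $\lambda\dupa{\tilde u}{\tilde v'}_{2,2}$ cancel and, using $\dupa{u}{\tilde v'}_{2,2}=\dupa{(\lambda-A_S)^{-1}u}{v'}_{2,2}$, one arrives at (c).

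The implication $(c)\Rightarrow(a)$ is the heart of the matter. Writing $R_T=(\lambda-A_T)^{-1}$ and $R_S^\sigma=(\lambda-A_S^\sigma)^{-1}$, I would iterate (c): applying it with $R_T^{\,n-2-k}u$ in place of $u$ and $(R_S^\sigma)^{\,k}v'$ in place of $v'$ (all still positive), the telescoping quantities $a_k:=\dupa{R_T^{\,n-1-k}u}{(R_S^\sigma)^{\,k}v'}_{2,2}$ satisfy $a_k-a_{k+1}\le C\dupa{R_T^{\,n-1-k}u}{g'}_{q,q'}\dupa{f}{(R_S^\sigma)^{\,k+1}v'}_{p,p'}$, and summing yields
\[\dupa{R_T^{\,n-1}u}{v'}_{2,2}-\dupa{u}{(R_S^\sigma)^{\,n-1}v'}_{2,2}\le C\sum_{l=1}^{n-1}\dupa{R_T^{\,n-l}u}{g'}_{q,q'}\dupa{f}{(R_S^\sigma)^{\,l}v'}_{p,p'}.\]
Specializing to $\lambda=(n-1)/t$ (legitimate once $n$ is large) and multiplying by $\lambda^{\,n-1}$, the Euler/exponential formula $\lambda^{\,n-1}R_T^{\,n-1}u\to T(t)u$ and its dual analogue turn the left-hand side into $\dupa{T(t)u}{v'}_{2,2}-\dupa{S(t)u}{v'}_{2,2}$. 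Setting $\varphi_m(t,j):=(m/t)^{\,j}\dupa{R_{T,m/t}^{\,j}u}{g'}_{q,q'}$ and $\psi_m(t,j):=(m/t)^{\,j}\dupa{f}{(R_{S,m/t}^\sigma)^{\,j}v'}_{p,p'}$, the identity $\varphi_m(t,j)=\varphi_j(tj/m,j)$ holds because at $\tau=tj/m$ the parameter $j/\tau$ equals $m/t$; this is precisely the self-similarity hypothesis of Lemma~\ref{lem:convolutions}, while $\varphi_m(\cdot,m)\to\dupa{T(\cdot)u}{g'}_{q,q'}$ and $\psi_m(\cdot,m)\to\dupa{f}{S(\cdot)'v'}_{p,p'}$ uniformly on compacts, again by the exponential formula. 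Extracting the factor $t/(n-1)$, the right-hand side equals $Ct\cdot\tfrac1{n-1}\sum_{l=1}^{n-1}\varphi_{n-1}(t,n-l)\psi_{n-1}(t,l)$, whose limit by Lemma~\ref{lem:convolutions} and the substitution $s\mapsto ts$ is $C\int_0^t\dupa{T(t-s)u}{g'}_{q,q'}\dupa{f}{S(s)'v'}_{p,p'}\,ds$, which is (a).

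The main obstacle is exactly this last passage to the limit: recasting the iterated resolvent inequality as the convolution sum handled by Lemma~\ref{lem:convolutions}, matching the rescaling $\lambda=(n-1)/t$ to the self-similarity relation, and tracking the normalizing powers of $\lambda$ so that the correct constant and integral emerge. A secondary technical point is the dual exponential formula $\lambda^{\,n-1}(R_S^\sigma)^{\,n-1}v'\to S(t)'v'$ tested against $f$, uniformly on compact time intervals: for $p=1$ the dual semigroup on $L_\infty(m)=L_{p'}(m)$ is only weak${}^*$-continuous, so I would read $\dupa{f}{S(s)'v'}_{p,p'}$ as $\dupa{S_p(s)f}{v'}_{p,p'}$ and transfer the exponential formula to the genuine $C_0$-semigroup $S_p$ acting on $f\in L_p(m)$, where convergence is strong and locally uniform.
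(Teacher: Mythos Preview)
Your argument is correct and follows essentially the same route as the paper: the same cycle $(a)\Rightarrow(b)\Rightarrow(c)\Rightarrow(a)$, with Lemma~\ref{lem:conv_int} for the first step, the resolvent substitution $\tilde u=(\lambda-A_T)^{-1}u$, $\tilde v'=(\lambda-A_S^\sigma)^{-1}v'$ for the second, and for the third the iterated resolvent inequality (your telescoping is exactly the paper's induction, reindexed) followed by the Euler formula and Lemma~\ref{lem:convolutions}. The only cosmetic differences are your choice $\lambda=(n-1)/t$ versus the paper's $\lambda=n/t$, and your explicit handling of the dual exponential formula via $S_p$ in the case $p=1$, which the paper leaves implicit.
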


\begin{proof}
  ``(a)$\Rightarrow$(b)'': Let $0\leq u\in D(A_T)\cap L_q(m)$, $0\leq v'\in D(A_S^{\sigma})\cap L_{p'}(m)$. For $t>0$ we have
  \begin{align*}
    & \dupa{\frac{1}{t}(T(t) u-u)}{v'}_{2,2} \\
    & = \dupa{\frac{1}{t} T(t) u}{v'}_{2,2} - \dupa{\frac{1}{t}u}{v'}_{2,2} \\
    & \leq \dupa{\frac{1}{t}(S(t) u-u)}{v'}_{2,2} + C \frac{1}{t}\int_0^t  \dupa{T(t-s)u}{g'}_{q,q'} \dupa{f}{S(s)'v'}_{p,p'}\, ds.
  \end{align*}
  Now, the limit $t\to 0$ yields the assertion by Lemma \ref{lem:conv_int}.
  %\textbf{Here we need that $S(t)'$ is strongly cont. on $D(A_S^\sigma)\subseteq Y'$.}
  
  ``(b)$\Rightarrow$(c)'': Let $\lambda\in \R$ be sufficiently large, $0\leq u\in L_2\cap L_q(m)$, $0\leq v'\in L_2\cap L_{p'}(m)$. 
  Then $0\leq (\lambda-A_T)^{-1}u=:\tilde{u}\in D(A_T)\cap L_q(m)$ and $0\leq (\lambda-A_S^\sigma)^{-1}v'=:\tilde{v}'\in D(A_S^\sigma)\cap L_{p'}(m)$.
  Furthermore,
  \begin{align*}
    & \dupa{(\lambda-A_T)^{-1} u}{v'}_{2,2} - \dupa{ u}{(\lambda-A_S^\sigma)^{-1}v'}_{2,2}\\ & = \dupa{A_T\tilde{u}}{\tilde{v}'}_{2,2} - \dupa{\tilde{u}}{A_S^\sigma \tilde{v}'}_{2,2} \\
    & \leq C\dupa{\tilde{u}}{g'}_{q,q'} \dupa{f}{\tilde{v}'}_{p,p'}\\
    & = C \dupa{(\lambda-A_T)^{-1}u}{g'}_{q,q'} \dupa{f}{(\lambda-A_S^{\sigma})^{-1}v'}_{p,p'}.
  \end{align*}
  
  ``(c)$\Rightarrow$(a)'': Let $0\leq u\in L_2\cap L_q(m)$, $0\leq v'\in L_2\cap L_{p'}(m)$. Let $\lambda\in \R$ be sufficiently large. 
  By induction on $n\in\N$ we obtain 
  %\textbf{hier noch etwas ausfuerhlicher!}
  \begin{align*} 
    & \dupa{(\lambda-A_T)^{-n} u}{v'}_{2,2} \\
    & \leq \dupa{(\lambda-A_S)^{-n} u}{v'}_{2,2} + C \sum_{l=1}^{n} \dupa{(\lambda-A_T)^{l-n-1}u}{g'}_{q,q'} \dupa{f}{(\lambda-A_S^\sigma)^{-l} v'}_{p,p'}.
  \end{align*}
  Indeed, the case $n=1$ holds by assumption. For the inductive step from $n\in \mathbb{N}$ to $n+1$ we observe
  \begin{align*}
   &\dupa{(\lambda-A_T)^{-n-1}u}{v'}_{2,2}\\&=\dupa{(\lambda-A_T)^{-1}(\lambda-A_T)^{-n}u}{v'}_{2,2} \\
    &\leq \dupa{(\lambda-A_S)^{-1}(\lambda-A_T)^{-n}u}{v'}_{2,2} \\
    &\quad + C\dupa{(\lambda-A_T)^{-1}(\lambda-A_T)^{-n}u}{g'}_{q,q'}\dupa{f}{(\lambda -A_S^\sigma)^{-1}v'}_{p,p'} \\
    &\leq \dupa{(\lambda-A_T)^{-n}u}{(\lambda-A_S^\sigma)^{-1}v'}_{2,2} \\
    &\quad + C\dupa{(\lambda-A_T)^{-1}(\lambda-A_T)^{-n}u}{g'}_{q,q'}\dupa{f}{(\lambda -A_S^\sigma)^{-1}v'}_{p,p'} \\
    &\leq \dupa{(\lambda-A_S)^{-n}u}{(\lambda-A_S^\sigma)^{-1}v'}_{2,2}
    \\ &\quad +  C \sum_{l=1}^{n} \dupa{(\lambda-A_T)^{l-n-1}u}{g'}_{q,q'} \dupa{f}{(\lambda-A_S^\sigma)^{-l}(\lambda-A_S^\sigma)^{-1} v'}_{p,p'}\\
    &\quad + C\dupa{(\lambda-A_T)^{-1}(\lambda-A_T)^{-n}u}{g'}_{q,q'}\dupa{f}{(\lambda -A_S^\sigma)^{-1}v'}_{p,p'}.
  \end{align*}
  %where we used the Remarks \ref{rem:Engel_Nagel_dual} and \ref{rem:duals_coincide} for noting that $(\lambda -A_S^\sigma)^{-1}v'$ defines the same element either if considered to be in $X'$ or $Y'$, respectively.
  
  Let $t>0$, and let $n\in\N$ be sufficiently large. Then
  \begin{align*}
  & \dupa{\left(1-\frac{tA_T}{n}\right)^{-n} u}{v'}_{2,2} \\
  & = \dupa{\left(\frac{n}{t}\right)^n\left(\frac{n}{t}-A_T\right)^{-n} u}{v'}_{2,2} \\
  & \leq \dupa{\left(\frac{n}{t}\right)^n\left(\frac{n}{t}-A_S\right)^{-n} u}{v'}_{2,2} 
  \\ &\quad + Ct \frac{1}{n} \sum_{l=1}^{n} \dupa{\left(\frac{n}{t}\right)^{n+1-l}\left(\frac{n}{t}-A_T\right)^{l-n-1}u}{g'}_{q,q'} \dupa{f}{\left(\frac{n}{t}\right)^{l}\left(\frac{n}{t}-A_S^\sigma\right)^{-l} v'}_{p,p'} \\
  & = \dupa{\left(1-\frac{tA_S}{n}\right)^{-n} u}{v'}_{2,2} 
  \\ &\quad + Ct \frac{1}{n} \sum_{l=1}^{n} \dupa{\left(\left(1-\frac{tA_T}{n}\right)^{-1}\right)^{n+1-l} u}{g'}_{q,q'} \dupa{f}{\left(\left(1-\frac{tA_S^\sigma}{n}\right)^{-1}\right)^l v'}_{p,p'}.
  \end{align*}
  Set $f_{n}(t,l):= \dupa{\left(\left(1-\frac{tA_T}{n}\right)^{-1}\right)^{l} u}{g'}_{q,q'}$, $g_{n}(t,l) := \dupa{f}{\left(\left(1-\frac{tA_S^\sigma}{n}\right)^{-1}\right)^l v'}_{p,p'}$.
  By Lemma \ref{lem:convolutions} and the exponential formula we obtain, as $n\to\infty$,
  \begin{align*}
    \dupa{T(t)u}{v'}_{2,2} & \leq \dupa{S(t) u}{v'}_{2,2} + C t\int_0^1 \dupa{T(t(1-s))u}{g'}_{q,q'} \dupa{f}{S(ts)'v'}_{p,p'}\, ds \\
    & = \dupa{S(t) u}{v'}_{2,2} + C \int_0^t \dupa{T(t-s)u}{g'}_{q,q'} \dupa{f}{S(s)'v'}_{p,p'}\, ds.
  \end{align*}\qed
\end{proof}

We can prove a similar statement to Theorem \ref{thm:main} (with essentially the same proof) by using the exponential growth bound for the semigroup $T$ on $L_q(m)$ and correspondingly the norm estimate for the resolvent $(\lambda-A_T)^{-1}$ on $L_q(m)$:

\begin{theorem}\label{thm:main_norm} Let $S,T$ be positive $C_0$-semigroups on $L_2(m)$ with generators $A_S, A_T$, respectively. Assume that $S$ also acts on $L_p(m)$ and $T$ also acts on $L_q(m)$. 
  Let $M\geq 1$, $\omega\in\R$ such that $\norm{T(t)u}_q\leq Me^{\omega t}\norm{u}_q$ for all $t\geq 0$ and $u\in L_q(m)$.
    
  Let $0\leq f\in L_p(m)$.
  Then the following are equivalent:
  \begin{enumerate}
    \item
      There exists $C\geq 0$ such that for all $t\geq 0$ and $0\leq u\in L_2\cap L_q(m)$, $0\leq v'\in L_2\cap L_{p'}(m)$ we have
      \[\dupa{T(t) u}{v'}_{2,2} \leq \dupa{S(t) u}{v'}_{2,2} + C\int_0^t e^{\omega (t-s)}\norm{u}_q \dupa{f}{S(s)'v'}_{p,p'}\, ds.\]
    \item
      There exists $C\geq 0$ such that for all $0\leq u\in D(A_T)\cap L_q(m)$, $0\leq v'\in D(A_S^{\sigma})\cap L_{p'}(m)$ we have
      \[\dupa{A_Tu}{v'}_{2,2} \leq \dupa{u}{A_S^{\sigma} v'}_{2,2} + C \norm{u}_q \dupa{f}{v'}_{p,p'}.\]
    \item
      There exists $C\geq 0$ such that for all $\lambda\in\R$ sufficiently large and $0\leq u\in L_2\cap L_q(m)$, $0\leq v'\in L_2\cap L_{p'}(m)$ we have
      \begin{align*}\dupa{(\lambda-A_T)^{-1} u}{v'}_{2,2} &\leq \dupa{(\lambda-A_S)^{-1} u}{v'}_{2,2} \\ &\quad + \frac{C}{\lambda-\omega}\norm{u}_q \dupa{f}{(\lambda-A_S^{\sigma})^{-1}v'}_{p,p'}.\end{align*}
  \end{enumerate}
\end{theorem}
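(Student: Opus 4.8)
The plan is to transcribe the proof of Theorem~\ref{thm:main}, running the cycle ``(a)$\Rightarrow$(b)$\Rightarrow$(c)$\Rightarrow$(a)'' and replacing the pairing $\dupa{T(t-s)u}{g'}_{q,q'}$ throughout by the scalar $e^{\omega(t-s)}\norm{u}_q$. Since each of (a), (b), (c) only asserts the \emph{existence} of some $C\geq 0$, the constant may change from one implication to the next; in particular the growth constant $M\geq 1$ and the Hille--Yosida constants will simply be absorbed into $C$. I abbreviate $R_T(\lambda)=(\lambda-A_T)^{-1}$, $R_S(\lambda)=(\lambda-A_S)^{-1}$ and $R_S^\sigma(\lambda)=(\lambda-A_S^\sigma)^{-1}=R_S(\lambda)'$ (Remark~\ref{rem:Engel_Nagel_dual}), and I use throughout that these resolvents are positive for $\lambda$ large, so that $R_T(\lambda)^k u\geq 0$, $R_S^\sigma(\lambda)^k v'\geq 0$ and hence $\dupa{f}{R_S^\sigma(\lambda)^k v'}_{p,p'}\geq 0$.

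For ``(a)$\Rightarrow$(b)'' I divide (a) by $t>0$ and let $t\downarrow 0$ exactly as before: the left-hand side tends to $\dupa{A_T u}{v'}_{2,2}$, the term $\tfrac1t\dupa{S(t)u-u}{v'}_{2,2}=\tfrac1t\dupa{u}{S(t)'v'-v'}_{2,2}$ tends to $\dupa{u}{A_S^\sigma v'}_{2,2}$ as $v'\in D(A_S^\sigma)$, and in place of Lemma~\ref{lem:conv_int} one only needs $\tfrac1t\int_0^t e^{\omega(t-s)}\dupa{f}{S(s)'v'}_{p,p'}\,ds\to\dupa{f}{v'}_{p,p'}$, which is immediate from weak${}^*$ continuity of $S(\cdot)'$. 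For ``(b)$\Rightarrow$(c)'' I put $\tilde u=R_T(\lambda)u\geq 0\in D(A_T)\cap L_q(m)$ and $\tilde v'=R_S^\sigma(\lambda)v'\geq 0\in D(A_S^\sigma)\cap L_{p'}(m)$, rewrite $\dupa{R_T(\lambda)u}{v'}_{2,2}-\dupa{R_S(\lambda)u}{v'}_{2,2}=\dupa{A_T\tilde u}{\tilde v'}_{2,2}-\dupa{\tilde u}{A_S^\sigma\tilde v'}_{2,2}$, bound it by $C\norm{\tilde u}_q\dupa{f}{\tilde v'}_{p,p'}$ using (b), and finally invoke the Hille--Yosida estimate $\norm{R_T(\lambda)u}_q\leq\frac{M}{\lambda-\omega}\norm{u}_q$ for the $L_q$-extension $T_q$ (legitimate since $R_T(\lambda)$ and the resolvent of $T_q$ agree on $L_2\cap L_q(m)$); as $\dupa{f}{\tilde v'}_{p,p'}\geq 0$ this produces (c) with $C$ replaced by $CM$.

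The substance is ``(c)$\Rightarrow$(a)''. First I prove by induction on $n$ that, for $\lambda$ large and $0\leq u,0\leq v'$,
\[\dupa{R_T(\lambda)^n u}{v'}_{2,2}\leq\dupa{R_S(\lambda)^n u}{v'}_{2,2}+C\sum_{l=1}^n\frac{1}{\lambda-\omega}\norm{R_T(\lambda)^{n-l}u}_q\dupa{f}{R_S^\sigma(\lambda)^l v'}_{p,p'},\]
the step from $n$ to $n+1$ being verbatim that of Theorem~\ref{thm:main} (apply (c) to $R_T(\lambda)^n u$, use $\dupa{R_S(\lambda)R_T(\lambda)^n u}{v'}=\dupa{R_T(\lambda)^n u}{R_S^\sigma(\lambda)v'}$ and the inductive hypothesis on $R_S^\sigma(\lambda)v'$, then reindex). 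The one genuine difference, and the main obstacle, is that the $T$-factor now carries the power $n-l$ rather than $n+1-l$, because the resolvent pairing has been replaced by the scalar $\tfrac{1}{\lambda-\omega}\norm{\cdot}_q$; this mismatch means the two factors no longer carry the complementary powers that Lemma~\ref{lem:convolutions} expects.

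I resolve this by folding the prefactor into the norm through Hille--Yosida: $\frac{1}{\lambda-\omega}\norm{R_T(\lambda)^{n-l}u}_q\leq\frac{M}{(\lambda-\omega)^{n+1-l}}\norm{u}_q$, which restores the power $n+1-l$ (here positivity of $\dupa{f}{R_S^\sigma(\lambda)^l v'}_{p,p'}$ ensures the inequality is preserved). Setting $\lambda=n/t$ and multiplying by $(n/t)^n$, the sum is then dominated by $C\tfrac tn\sum_{l=1}^n\varphi_n(t,n+1-l)\psi_n(t,l)$ with
\[\varphi_n(t,l):=M\norm{u}_q\Bigl(1-\tfrac{\omega t}{n}\Bigr)^{-l},\qquad \psi_n(t,l):=\dupa{f}{\bigl((1-\tfrac{tA_S^\sigma}{n})^{-1}\bigr)^l v'}_{p,p'}.\]
Both satisfy the scaling relation $\varphi_n(t,l)=\varphi_l(t\tfrac ln,l)$ of Lemma~\ref{lem:convolutions}, and $\varphi_n(\cdot,n),\psi_n(\cdot,n)$ converge uniformly on compacts to $\varphi(t)=M\norm{u}_q e^{\omega t}$ and $\psi(t)=\dupa{f}{S(t)'v'}_{p,p'}$ respectively, by the (weak${}^*$) exponential formula, using consistency of the semigroups on $L_2\cap L_p(m)$. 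Lemma~\ref{lem:convolutions} then sends the dominating sum to $C\int_0^t\varphi(t-s)\psi(s)\,ds=CM\norm{u}_q\int_0^t e^{\omega(t-s)}\dupa{f}{S(s)'v'}_{p,p'}\,ds$, while the exponential formula sends the outer terms to $\dupa{T(t)u}{v'}_{2,2}$ and $\dupa{S(t)u}{v'}_{2,2}$. Letting $n\to\infty$ and renaming $CM$ as $C$ yields (a). The only points requiring care are this positivity bookkeeping and the applicability of the exponential formula in the $L_q$-norm and weak${}^*$ senses, both of which rest on the consistency of the two semigroups on $L_2\cap L_q(m)$ and $L_2\cap L_p(m)$.
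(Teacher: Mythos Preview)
Your proposal is correct and follows precisely the approach the paper indicates: the paper's own proof consists of the single remark that Theorem~\ref{thm:main_norm} is proved ``with essentially the same proof'' as Theorem~\ref{thm:main}, ``using the exponential growth bound for the semigroup $T$ on $L_q(m)$ and correspondingly the norm estimate for the resolvent $(\lambda-A_T)^{-1}$ on $L_q(m)$''. You have carried this out in detail, and in particular you have correctly isolated and resolved the one nontrivial wrinkle---the shift in the resolvent power from $n+1-l$ to $n-l$ caused by the prefactor $\tfrac{1}{\lambda-\omega}$---by absorbing it back via the Hille--Yosida bound $\tfrac{1}{\lambda-\omega}\norm{R_T(\lambda)^{n-l}u}_q\leq\tfrac{M}{(\lambda-\omega)^{n+1-l}}\norm{u}_q$, at the cost of an extra factor $M$ in the constant.
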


Now, instead of a weak fomulation (using dual pairings) in Theorem \ref{thm:main_norm}, we want to prove a strong verion of this Theorem.
We need some preparation.

\begin{lemma}
\label{lem:positive_test_BL} Let $f\in L_p(m)$, $g\in L_q(m)$.
The following are equivalent:
  \begin{enumerate}
    \item $f+g\geq 0$.
    \item $\langle f+g,v'\rangle_{L_p(m)+L_q(m), L_{p'}\cap L_{q'}(m)} \geq 0$ for all $0\leq v'\in L_{p'}\cap L_{q'}(m)$.
  \end{enumerate}
\end{lemma}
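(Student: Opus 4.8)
We have $f \in L_p(m)$ and $g \in L_q(m)$, so $f+g$ lives in the sum space $L_p(m) + L_q(m)$. The claim is that $f+g \geq 0$ (as an element of this sum, i.e., $m$-a.e.) is equivalent to the pairing $\langle f+g, v'\rangle \geq 0$ for all nonnegative $v'$ in the intersection $L_{p'}(m) \cap L_{q'}(m)$. The direction (a)$\Rightarrow$(b) is trivial since integrating a nonnegative function against a nonnegative test function gives something nonnegative. So the content is (b)$\Rightarrow$(a).

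**Key idea for (b)$\Rightarrow$(a).** The natural approach is contrapositive: if $f+g$ is not $\geq 0$, produce a nonnegative test function $v' \in L_{p'} \cap L_{q'}$ witnessing a strictly negative pairing. Suppose the set $E := \{f+g < 0\}$ has positive measure. Since $(\Omega, \mathcal{F}, m)$ is localizable, hence semifinite, I can find a subset $B \subseteq E$ with $0 < m(B) < \infty$. I want to use $v' = \mathbf{1}_{B}$, but I must check $\mathbf{1}_B \in L_{p'} \cap L_{q'}$, which holds precisely because $m(B) < \infty$ (a finite-measure set's indicator is in every $L_r$). Then $\langle f+g, \mathbf{1}_B\rangle = \int_B (f+g)\, dm$. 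The obstacle is that this integral is an honest integral of $f+g$ over $B$, and I want it to be negative; but $f+g$ being negative a.e.\ on $B$ does not immediately guarantee the integral is finite and negative unless I control the integrability.

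**Handling integrability — the main obstacle.** The delicate point is that $f+g$ as an element of $L_p + L_q$ need not be integrable over an arbitrary finite-measure set. The clean fix is to shrink $B$ further so that $f+g$ is bounded away from zero and bounded above in absolute value on it. Concretely, on $E = \{f+g<0\}$ consider the sets $E_{k} := \{-k \leq f+g \leq -1/k\}$ for $k \in \N$; these increase to $E$, so some $E_k$ has positive measure, and by semifiniteness I extract $B \subseteq E_k$ with $0 < m(B) < \infty$. On such $B$ the function $f+g$ is bounded, hence $\int_B (f+g)\, dm$ is finite and satisfies $\int_B (f+g)\, dm \leq -\frac{1}{k} m(B) < 0$. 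Testing against $v' = \mathbf{1}_B \geq 0$ then yields $\langle f+g, v'\rangle < 0$, contradicting (b). The remaining care is to confirm that the sum-space pairing $\langle f+g, v'\rangle_{L_p+L_q,\, L_{p'}\cap L_{q'}}$ decomposes as $\langle f, v'\rangle_{p,p'} + \langle g, v'\rangle_{q,q'} = \int_\Omega (f+g) v'\, dm$ and that restricting to the bounded finite-measure set $B$ makes this literally $\int_B (f+g)\, dm$; this is routine once one notes $v'$ lies in both dual spaces.

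**Plan summary.** First I dispatch (a)$\Rightarrow$(b) by monotonicity of the integral. For (b)$\Rightarrow$(a) I argue by contraposition: assuming $f+g \not\geq 0$, I locate via semifiniteness a finite-measure subset $B$ of $\{f+g \leq -1/k\}$ on which $f+g$ is additionally bounded below in modulus, take $v'=\mathbf{1}_B$, verify $v' \in L_{p'} \cap L_{q'}$ from $m(B)<\infty$, and compute the pairing as $\int_B(f+g)\,dm < 0$. I expect the one genuinely careful step to be the truncation ensuring both finiteness of the integral and a strictly negative sign; the appeal to localizability (specifically semifiniteness) is exactly what makes the finite-measure witness available on an arbitrary $\sigma$-finite-or-localizable space.
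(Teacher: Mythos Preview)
The paper states this lemma without proof, evidently regarding it as routine, so there is no authorial argument to compare against. Your proof is correct.

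One minor comment: the truncation to $E_k=\{-k\le f+g\le -1/k\}$ is not actually needed. Once you have $B\subseteq\{f+g<0\}$ with $0<m(B)<\infty$, H\"older's inequality already gives $f|_B\in L_1(B)$ (since $\mathbf{1}_B\in L_{p'}$) and likewise $g|_B\in L_1(B)$, so $(f+g)|_B\in L_1(B)$ and the pairing $\langle f+g,\mathbf{1}_B\rangle=\int_B f\,dm+\int_B g\,dm=\int_B(f+g)\,dm$ is automatically a finite Lebesgue integral; since the integrand is strictly negative a.e.\ on a set of positive measure and integrable, the integral is strictly negative. Your extra truncation is harmless, but it is not the ``genuinely careful step'' you flag it as---the integrability obstacle you anticipated does not arise.
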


As a Corollary, we can obtain the strong version of Theorem \ref{thm:main_norm}.

\begin{corollary}
\label{cor:strong_version} 
Let $S,T$ be positive $C_0$-semigroups on $L_2(m)$ with generators $A_S, A_T$, respectively. Assume that $S$ also acts on $L_p(m)$ and $T$ also acts on $L_q(m)$. Let $M\geq 1$, $\omega\in\R$ such that $\norm{T(t)u}_q\leq Me^{\omega t}\norm{u}_q$ for all $t\geq 0$ and $u\in L_q(m)$.
  
  Let $0\leq f\in L_p(m)$.
  Then the following are equivalent:
  \begin{enumerate}
    \item
      There exists $C\geq 0$ such that for all $t\geq 0$ and $0\leq u\in L_2\cap L_q(m)$, we have
      \[T(t) u \leq S(t) u +  C\int_0^t e^{\omega (t-s)}\norm{u}_q S(s)f\, ds.\]
    \item
      There exists $C\geq 0$ such that for all $0\leq u\in D(A_T)\cap L_q(m)$, $0\leq v'\in D(A_S^{\sigma})\cap L_{p'}(m)$ we have
      \[\dupa{A_Tu}{v'}_{2,2} \leq \dupa{u}{A_S^{\sigma} v'}_{2,2} + C \norm{u}_q\dupa{f}{v'}_{p,p'}.\]
    \item
      There exists $C\geq 0$ such that for all $\lambda\in\R$ sufficiently large and $0\leq u\in L_2\cap L_q(m)$ we have
      \[(\lambda-A_T)^{-1} u \leq (\lambda-A_S)^{-1} u + \frac{C}{\lambda-\omega}\norm{u}_q  (\lambda-A_S)^{-1}f.\]
  \end{enumerate}
\end{corollary}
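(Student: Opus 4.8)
The plan is to deduce all three \emph{strong} statements from their \emph{weak} counterparts in Theorem~\ref{thm:main_norm} by means of the positivity criterion in Lemma~\ref{lem:positive_test_BL}. First I would observe that statement~(b) here is literally statement~(b) of Theorem~\ref{thm:main_norm}; consequently the three weak conditions of that theorem are already known to be equivalent. It therefore suffices to prove, with the \emph{same} constant $C$, that the strong inequality~(a) is equivalent to the weak inequality in Theorem~\ref{thm:main_norm}(a), and that the strong inequality~(c) is equivalent to the weak inequality in Theorem~\ref{thm:main_norm}(c). Chaining these two equivalences through Theorem~\ref{thm:main_norm} then closes the circle between (a), (b) and (c) as stated.

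For the equivalence in~(a) I would fix $t\geq0$ and $0\leq u\in L_2\cap L_q(m)$ and sort out where each term lives. Since $T$ acts on $L_2$ and on $L_q$ and $S$ is a semigroup on $L_2$, both $T(t)u$ and $S(t)u$ lie in $L_2(m)$, so $S(t)u-T(t)u\in L_2(m)$; on the other hand $f\in L_p(m)$ and $S$ acts on $L_p$, so $s\mapsto e^{\omega(t-s)}\norm{u}_q S(s)f$ is a continuous $L_p(m)$-valued map and $h:=C\int_0^t e^{\omega(t-s)}\norm{u}_q S(s)f\,ds$ is a well-defined element of $L_p(m)$. The inequality in~(a) is exactly the positivity of $h+\bigl(S(t)u-T(t)u\bigr)\in L_p(m)+L_2(m)$. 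Applying Lemma~\ref{lem:positive_test_BL} with the pair of exponents $p$ and $2$ (the lemma holds for any two exponents, and $2'=2$), this positivity is equivalent to
\[\dupa{h}{v'}_{p,p'}+\dupa{S(t)u-T(t)u}{v'}_{2,2}\geq 0\qquad(0\leq v'\in L_{p'}\cap L_2(m)).\]
Since the $L_p$--$L_{p'}$ pairing is continuous it commutes with the Bochner integral, and the adjoint identity $\dupa{S(s)f}{v'}_{p,p'}=\dupa{f}{S(s)'v'}_{p,p'}$ turns $\dupa{h}{v'}_{p,p'}$ into $C\int_0^t e^{\omega(t-s)}\norm{u}_q\dupa{f}{S(s)'v'}_{p,p'}\,ds$. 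Rearranging gives precisely Theorem~\ref{thm:main_norm}(a), over the same class $0\leq v'\in L_{p'}\cap L_2(m)$ of test functions.

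The equivalence in~(c) I would treat in the same fashion. For $\lambda$ sufficiently large and $0\leq u\in L_2\cap L_q(m)$ one has $(\lambda-A_T)^{-1}u,\,(\lambda-A_S)^{-1}u\in L_2(m)$ and $(\lambda-A_S)^{-1}f\in L_p(m)$, so the inequality in~(c) is the positivity of
\[\tfrac{C}{\lambda-\omega}\norm{u}_q(\lambda-A_S)^{-1}f+\bigl((\lambda-A_S)^{-1}u-(\lambda-A_T)^{-1}u\bigr)\in L_p(m)+L_2(m).\]
Lemma~\ref{lem:positive_test_BL} (again with exponents $p$ and $2$) reduces this to testing against all $0\leq v'\in L_{p'}\cap L_2(m)$, and the dual relation $\bigl((\lambda-A_S)^{-1}\bigr)'=(\lambda-A_S^\sigma)^{-1}$ from Remark~\ref{rem:Engel_Nagel_dual} rewrites $\dupa{(\lambda-A_S)^{-1}f}{v'}_{p,p'}=\dupa{f}{(\lambda-A_S^\sigma)^{-1}v'}_{p,p'}$. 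The resulting scalar inequality is exactly Theorem~\ref{thm:main_norm}(c).

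The hard part is not any single estimate but the bookkeeping of function spaces: one must check that the subtracted semigroup/resolvent terms land in $L_2(m)$ while the perturbation terms land in $L_p(m)$, so that Lemma~\ref{lem:positive_test_BL} applies with the pair $(p,2)$ and, decisively, with test class $L_{p'}\cap L_2(m)$ --- which is exactly the test class occurring in Theorem~\ref{thm:main_norm}. The only genuinely technical points are the well-definedness of the $L_p(m)$-valued Bochner integral in~(a) and the interchange of the $L_p$--$L_{p'}$ pairing with it (justified by continuity of the pairing and local boundedness of $S$ on $L_p(m)$), together with the correct identification of the dual resolvent through $A_S^\sigma=A_S'$ supplied by Remark~\ref{rem:Engel_Nagel_dual}.
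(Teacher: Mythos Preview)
Your argument is correct and follows exactly the paper's approach: note that (b) is identical in both statements, and use Lemma~\ref{lem:positive_test_BL} to pass between the strong inequalities (a), (c) and the corresponding weak ones in Theorem~\ref{thm:main_norm}. You have simply fleshed out the bookkeeping (function-space memberships, Bochner integral interchange, dual resolvent identification via Remark~\ref{rem:Engel_Nagel_dual}) that the paper's two-line proof leaves implicit.
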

\begin{proof} 
  Using Theorem \ref{thm:main_norm}, we realize that it suffices to observe that the statements (a),(b) and (c) from Theorem \ref{thm:main_norm} are equivalent to the corresponding ones here. For this, note that there is nothing to show for the statement (b) and that the remaining equivalences follow from Lemma \ref{lem:positive_test_BL}. \qed
\end{proof}

The main result of \cite{SeifertWingert2014} can be seen as a variant of Corollary \ref{cor:strong_version}.
There, the authors worked with $p=\infty$, $p'=1$ and $f=\1_\Omega$ (note that we require $p<\infty$ here; in order to obtain the case $p=\infty$ one needs another duality argument as in \cite{SeifertWingert2014}).

\section{Kernel estimates}\label{sec:kernel_estimate}

Let $(\Omega,\mathcal{F},m)$ be $\sigma$-finite.
Let $S,T$ be positive $C_0$-semigroups on $L_2(m)$ with generators $A_S, A_T$, respectively, $1\leq p,q<\infty$.
Assume $S$ has a kernel $k^S\from(0,\infty)\times\Omega^2\to \R$ on $L_2\cap L_p(m)$,  i.e.
\[S(t) u = \int_\Omega k^S(t,\cdot,y)u(y)\, dm(y) \quad(u\in L_2\cap L_p(m), t> 0),\]
and similarly let $T$ have a kernel $k^T$ on $L_2\cap L_q(m)$.

We can now use our result to prove kernels estimates for $T$ in terms of the kernel for $S$.

\begin{corollary}
\label{cor:kernel_estimate}
  In the above situation, let $S$ also act on $L_p(m)$ having a kernel $k^S$ on $L_2\cap L_p(m)$, and $T$ also act on $L_q(m)$ having a kernel $k^T$ on $L_2\cap L_q(m)$.
  Assume there exists $0\leq f\in L_p(m)$, $0\leq g'\in L_{q'}(m)$, $C\geq0$ such that 
  \[\dupa{A_Tu}{v'}_{2,2} \leq \dupa{u}{A_S^{\sigma} v'}_{2,2} + C \dupa{u}{g'}_{q,q'}\dupa{f}{v'}_{p,p'}\]
  for all $0\leq u\in D(A_T)\cap L_q(m)$, $0\leq v'\in D(A_S^{\sigma})\cap L_{p'}(m)$.
  Then
  \[k^T(t,x,y)\leq k^S(t,x,y) + C\int_0^t  \int_\Omega k^T(t-s,w,y)g'(w)\,dm(w) \int_\Omega  k^S(s,x,z)f(z)\,dm(z)\, ds\]
  for $m^2$-a.a.\ $(x,y)\in\Omega^2$ and $t> 0$.
\end{corollary}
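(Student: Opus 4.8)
The plan is to derive the pointwise estimate from statement~(a) of Theorem~\ref{thm:main}, specialising the scalar inequality to indicators of finite-measure sets and then peeling off the test functions by a measure-theoretic argument. First I observe that the standing assumption of the corollary is exactly condition~(b) of Theorem~\ref{thm:main} for the given $f,g',C$. Hence Theorem~\ref{thm:main} applies and delivers condition~(a): for all $t\ge 0$, $0\le u\in L_2\cap L_q(m)$ and $0\le v'\in L_2\cap L_{p'}(m)$,
\[\dupa{T(t)u}{v'}_{2,2}\le\dupa{S(t)u}{v'}_{2,2}+C\int_0^t\dupa{T(t-s)u}{g'}_{q,q'}\dupa{f}{S(s)'v'}_{p,p'}\,ds.\]
I then fix $t>0$ and $A,B\in\mathcal F$ with $m(A),m(B)<\infty$, and apply this with $u=\1_B$ and $v'=\1_A$; indicators of finite-measure sets lie in every $L_r(m)$, hence in the required intersection spaces, in particular in $L_2\cap L_p(m)$ and $L_2\cap L_q(m)$, so that the kernel representations of $S$ and $T$ are available for them.

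Second, I insert the kernels. Writing $\dupa{f}{S(s)'v'}_{p,p'}=\dupa{S(s)f}{v'}_{p,p'}$ and using $S(t)\1_B=\int_\Omega k^S(t,\cdot,y)\1_B(y)\,dm(y)$ together with the analogous formulas for $T$, each of the four pairings becomes an integral of a kernel. The only point needing care is that $f$ is assumed to lie in $L_p(m)$ but not necessarily in $L_2(m)$; here I approximate $f$ from below by bounded functions of finite-measure support, which do lie in $L_2\cap L_p(m)$, and pass to the limit, monotone convergence being applicable because $k^S\ge 0$. Since $k^S,k^T,f,g'\ge 0$, Tonelli's theorem then lets me interchange the time integral with the spatial integrations and assemble the time convolution, yielding
\[\int_A\int_B\Bigl[k^T(t,x,y)-k^S(t,x,y)-C\,R(t,x,y)\Bigr]\,dm(y)\,dm(x)\le 0,\]
where $R(t,x,y):=\int_0^t\int_\Omega k^T(t-s,w,y)g'(w)\,dm(w)\int_\Omega k^S(s,x,z)f(z)\,dm(z)\,ds$ is exactly the right-hand side integrand of the claim; as the individual pairings are finite real numbers, the rearrangement inside the bracket is legitimate.

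Third, I remove the sets $A,B$. Abbreviating $\Phi:=k^T(t,\cdot,\cdot)-k^S(t,\cdot,\cdot)-C\,R(t,\cdot,\cdot)$, the previous inequality says $\int_{A\times B}\Phi\,dm^2\le 0$ for every rectangle $A\times B$ with finite-measure sides. Using $\sigma$-finiteness I exhaust $\Omega^2$ by finite-measure boxes; on each box $\Phi$ is integrable, so $E\mapsto\int_E\Phi\,dm^2$ is a finite signed measure that is non-positive on the $\pi$-system of finite-measure rectangles. Approximating the set $\{\Phi>0\}$ intersected with a box by finite disjoint unions of such rectangles shows this signed measure is non-positive on $\{\Phi>0\}$ as well; being simultaneously non-negative there, it forces $m^2(\{\Phi>0\})=0$, which is precisely the asserted estimate for $m^2$-a.a.\ $(x,y)$ and $t>0$.

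I expect the last step — passing from the bilinear inequality tested against indicators to the pointwise inequality — to be the main obstacle, together with the justification of Tonelli's theorem and of the extension of the kernel representation to $f\in L_p(m)$. Positivity of the two semigroups, and hence of their kernels, is what renders all the interchanges and limit passages harmless.
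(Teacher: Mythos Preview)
Your proof is correct and follows the same route as the paper: invoke Theorem~\ref{thm:main} to pass from the generator inequality~(b) to the semigroup inequality~(a), then translate the latter into a pointwise kernel estimate by testing against indicators and a product-measure argument. The paper delegates this second step to an external reference (\cite[Korollar~2.1.11]{w2011}), whereas you spell it out; the details you give (Tonelli via positivity of the kernels, approximation of $f$ by $L_2\cap L_p$ functions, and approximation of $\{\Phi>0\}$ by finite unions of rectangles using absolute continuity of the integral on finite-measure boxes) are the standard ones and are sound.
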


\begin{proof}
  The kernel estimate follows from the corresponding estimate for the semigroups in Theorem \ref{thm:main} and reasoning as in the proof of \cite[Korollar 2.1.11]{w2011}.\qed
\end{proof}

\begin{remark}
  Note that we can choose $p=q=2$ in the preceeding corollary. 
\end{remark}

In case $q=1$, we can make use of Corollary \ref{cor:strong_version} to even deduce the existence of a kernel $k^T$ for $T$.

\begin{corollary}
\label{cor:kernel_estimate_norm}
  In the above situation with $q=1$, let $S$ also act on $L_p(m)$ having a kernel on $L_2\cap L_p(m)$, and $T$ also act on $L_1(m)$.
  Assume there exists $0\leq f\in L_p(m)$, $C_0\geq 0$ such that 
  \[\dupa{A_Tu}{v'}_{2,2} \leq \dupa{u}{A_S^{\sigma} v'}_{2,2} + C_0 \norm{u}_{1}\dupa{f}{v'}_{p,p'}\]
  for all $0\leq u\in D(A_T)\cap L_1(m)$, $0\leq v'\in D(A_S^{\sigma})\cap L_{p'}(m)$.

  Then $T$ has a kernel $k^T\from(0,\infty)\times\Omega^2\to \R$ on $L_2\cap L_q(m)$, and there exists $C\geq 0$ such that
  \[k^T(t,x,y)\leq k^S(t,x,y) + C\int_0^t e^{\omega (t-s)} \int_\Omega k^S(s,x,z)f(z)\,dm(z)\, ds\]
  for $m^2$-a.a.\ $(x,y)\in\Omega^2$ and $t> 0$.
\end{corollary}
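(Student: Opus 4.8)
The plan is to deduce everything from the strong variation-of-constants estimate of Corollary~\ref{cor:strong_version}. The hypothesis stated here is exactly statement~(b) of that corollary in the case $q=1$, so I may invoke the equivalent statement~(a): there is $C\geq 0$ such that
\[
  T(t)u \leq S(t)u + C\int_0^t e^{\omega(t-s)}\norm{u}_1\,S(s)f\,ds
\]
for all $t>0$ and all $0\leq u\in L_2\cap L_1(m)$.

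The decisive feature of the case $q=1$ is that for $u\geq 0$ one has $\norm{u}_1=\int_\Omega u\,dm$, so the correction term is linear in $u$ and can be absorbed into a kernel. Inserting the kernel of $S$ and writing
\[
  G(t,x):=C\int_0^t e^{\omega(t-s)}\int_\Omega k^S(s,x,z)f(z)\,dm(z)\,ds\geq 0,
\]
the estimate becomes
\[
  0\leq T(t)u(x)\leq \int_\Omega \bigl(k^S(t,x,y)+G(t,x)\bigr)u(y)\,dm(y)
\]
for $0\leq u\in L_2\cap L_1(m)$. Thus, for each fixed $t>0$, the positive operator $T(t)$ is dominated by the positive kernel operator whose kernel is $(x,y)\mapsto k^S(t,x,y)+G(t,x)$.

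First I would prove existence of $k^T$. Since the kernel operators form a band in the space of regular operators, any positive operator dominated by a kernel operator is itself a kernel operator; applied to the domination above (after localising to sets of finite measure via $\sigma$-finiteness, which guarantees the dominating operator is well defined), this produces for each $t>0$ a measurable $k^T(t,\cdot,\cdot)\geq 0$ representing $T(t)$ on $L_2\cap L_1(m)$. A single jointly measurable version $k^T$ on $(0,\infty)\times\Omega^2$ is then selected as in the construction underlying Corollary~\ref{cor:kernel_estimate}, using the strong continuity of $T$ and an exhaustion of $\Omega$. Once $k^T$ is available, the representation $T(t)u(x)=\int_\Omega k^T(t,x,y)u(y)\,dm(y)$ together with the domination gives
\[
  \int_\Omega k^T(t,x,y)u(y)\,dm(y)\leq \int_\Omega \bigl(k^S(t,x,y)+G(t,x)\bigr)u(y)\,dm(y)
\]
for all $0\leq u\in L_2\cap L_1(m)$; testing with $u=\1_B$ over all $B\in\mathcal{F}$ of finite measure yields the pointwise comparison $k^T(t,x,y)\leq k^S(t,x,y)+G(t,x)$ for $m^2$-a.a.\ $(x,y)$, which is the claimed inequality.

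The main obstacle is the existence part: one must combine the domination-implies-kernel theorem for positive operators with a careful selection that is jointly measurable in $(t,x,y)$ and valid off a single null set for all $t$ simultaneously, so that the family of almost-everywhere estimates can be amalgamated into one kernel $k^T$ on $(0,\infty)\times\Omega^2$.
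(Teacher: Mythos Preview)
Your proposal is correct and follows essentially the same route as the paper's proof. The paper likewise invokes Corollary~\ref{cor:strong_version}(a), observes that $\norm{u}_1=\int u\,dm$ for $u\geq 0$, obtains the existence of $k^T$ from the domination-by-kernel-operator principle (citing \cite[Theorem~5.9]{aa2002}, which is precisely the band property you invoke), and then extracts the pointwise kernel inequality by the standard testing argument (citing \cite{w2011}).
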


\begin{proof}
  The existence of a kernel for $T$ follows from \cite[Theorem 5.9]{aa2002} (note that $\norm{u}_1 = \int 1\cdot u\,dm$ for $0\leq u\in L_2\cap L_1(m)$).
  The kernel estimate then follows from the corresponding estimate for the semigroups in Corollary \ref{cor:strong_version}(a) and reasoning as in the proof of \cite[Korollar 2.1.11]{w2011}.\qed
\end{proof}

% \begin{remark}
% \label{rem:main_norm_strong}
%   One can formulate and prove a similar corollary as the preceeding one in the spirit of Theorem \ref{thm:main}. That is to say in the situation of Corollary \ref{cor:kernel_estimate} instead of 
%   \begin{multline*}\dupa{A_Tu}{v'}_{2,2} \leq \dupa{u}{A_S^{\sigma} v'}_{2,2} + C_0 \norm{u}_{q}\dupa{f}{v'}_{p,p'}\\
%     \text{ for all }0\leq u\in D(A_T)\cap L_q(m), 0\leq v'\in D(A_S^{\sigma})\cap L_{p'}(m),\text{ and some }C_0\geq 0,
%   \end{multline*}
%   assume
%   \begin{multline*}
%   \dupa{A_Tu}{v'}_{2,2} \leq \dupa{u}{A_S^{\sigma} v'}_{2,2} + C_0 \dupa{u}{g'}_{q,q'}\dupa{f}{v'}_{p,p'},\\
%   \text{ for all }0\leq u\in D(A_T)\cap L_q(m), 0\leq v'\in D(A_S^{\sigma})\cap L_{p'}(m),\text{ and some }C_0\geq 0, 0\leq g'\in L_{q'}(m).
%   \end{multline*} 
%   Then Theorem \ref{thm:main} implies together with the reasoning in the proof of Corollary \ref{cor:kernel_estimate}, that $T$ has a kernel satisfying
%   \[k^T(t,x,y)\leq k^S(t,x,y) + C_0\int_0^t  \int_\Omega \int_\Omega k^T(t-s,x,w)g'(w)\,dm(w) k^S(s,x,z)f(z)\,dm(z)\, ds\]
%     for $m^2$-a.a.\ $(x,y)\in\Omega^2$ and $t\geq 0$.
% \end{remark}

\section{Application to perturbations of forms by jump parts}\label{sec:application_forms}

Let $(\Omega,\mathcal{F},m)$ be a measure space.
We give a short introduction to forms on $L_2$-spaces; for more information see e.g.\ \cite{Ouhabaz2005,isem18}.

A bilinear map $\form\from D(\form)\times D(\form)\to \R$, where $D(\form)$ is a subspace of $L_2(m)$, is called a \emph{form}. We write $\form(u):=\form(u,u)$ for the corresponding quadratic form.
A form $\form$ is \emph{densely defined} if $D(\form)$ is dense in $L_2(m)$. It is called \emph{accretive} if $\form(u,u)\geq 0$ for all $u\in D(\form)$.
$\form$ is called \emph{continuous} if there exists $M\geq 0$ such that
\[\abs{\form(u,v)}\leq M\norm{u}_\form \norm{v}_\form \quad(u,v\in D(\form)),\]
where $\norm{\cdot}_\form := \bigl(\form(\cdot) + \norm{\cdot}_2^2\bigr)^{1/2}$ is the \emph{form norm}. We say that $\form$ is closed if $(D(\form),\norm{\cdot}_\form)$ is complete.

\begin{remark}
  Let $\form$ be densely defined, accretive, continuous and closed. Then we can associate an operator $A$ to $\form$ via
  \[A \coloneqq \set{(u,v)\in L_2(m)\times L_2(m);\; u\in D(\form),\, \form(u,\varphi) = \sp{v}{\varphi} \quad(\varphi\in D(\form))}.\]
  Note that $A$ is an \rm{m}-accretive operator, i.e.\ $\sp{Au}{u}\geq 0$ for all $u\in D(A)$ and $R(I+A) = L_2(m)$. Furthermore, $-A$ generates a contractive $C_0$-semigroup $(e^{-tA})_{t\geq0}$ on $L_2(m)$. 
\end{remark}

The following remarks describing Ouhabaz type criteria for invariant subsets are consequences of \cite[Proposition 2.9]{ArendtterElst2012} and \cite[Remark 9.3]{isem18}; see also \cite[Theorem 2.6 and Theorem 2.13]{Ouhabaz2005} and \cite[Theorem 10.12]{isem18} for related results.

\begin{remark}
\label{rem:positive}
  Let $\form$ be a densely defined accretive continuous closed form in $L_2(m)$ (over $\R$), 
  $A$ the associated operator and $T = (e^{-tA})_{t\geq0}$ the associated $C_0$-semigroup. 
  The following are equivalent.
  \begin{enumerate}
    \item $T$ is positive, i.e.\ $T(t)u\geq 0$ for all $0\leq u\in L_2(m)$, $t\geq0$.
    \item For $u\in D(\form)$ we have $u^+\in D(\form)$ and $\form(u^+,u^-)\leq 0$.
  \end{enumerate}
\end{remark}

\begin{remark}
\label{rem:Linfcontractive}
  Let $\form$ be a densely defined accretive continuous closed form in $L_2(m)$ (over $\R$), 
  $A$ the associated operator and $T = (e^{-tA})_{t\geq0}$ the associated $C_0$-semigroup. 
  The following are equivalent.
  \begin{enumerate}
    \item $T$ is positive and $L_\infty$-contractive, i.e.\ $\norm{T(t)u}_\infty \leq \norm{u}_\infty$ for all $u\in L_2(m)\cap L_\infty(m)$, $t\geq0$.
    \item For $u\in D(\form)$ we have $u\wedge 1 \in D(\form)$ and $\form(u\wedge 1, (u-1)^+)\geq 0$.
  \end{enumerate}
\end{remark}

\begin{remark}
\label{rem:L1contractive}
  Let $\form$ be a densely defined accretive continuous closed form in $L_2(m)$ (over $\R$), 
  $A$ the associated operator and $T = (e^{-tA})_{t\geq0}$ the associated $C_0$-semigroup. 
  The following are equivalent.
  \begin{enumerate}
    \item $T$ is positive and $L_1$-contractive, i.e.\ $\norm{T(t)u}_1 \leq \norm{u}_1$ for all $u\in L_2(m)\cap L_1(m)$, $t\geq0$.
    \item For $u\in D(\form)$ we have $u\wedge 1 \in D(\form)$ and $\form((u-1)^+,u\wedge 1)\geq 0$.
  \end{enumerate}
\end{remark}

Let $\form_0$ be a densely defined accretive continuous closed form on $L_2(m)$ with associated operator $A_0$ and $C_0$-semigroup $S = (e^{-tA_0})_{t\geq 0}$, such that
$S$ is positive and contractive in $L_1(m)$ or $L_\infty(m)$ (cf.\ the Remarks \ref{rem:positive}, \ref{rem:Linfcontractive} and \ref{rem:L1contractive}).
Then $S$ also acts on $L_p(m)$ for all $1\leq p\leq 2$ or $2\leq p<\infty$, respectively, by interpolation.
Note that if $\form_0$ is \emph{symmetric}, i.e.\ $\form_0(u,v) = \form_0(v,u)$ for all $u,v\in D(\form_0)$, then $A_0$ is self-adjoint and also $S(t)$ is self-adjoint for all $t\geq0$.
By duality, we then obtain that $S$ acts on the whole scale of $L_p(m)$-spaces.

Furthermore, let us assume 
\[D_{\fin}:=\set{u\in D(\form_0);\; m([u\neq 0])<\infty}\] is dense in $D(\form_0)$.

Let $j\from \Omega\times\Omega\to \R$ be measurable, $j\geq 0$, such that $\int_B j(x,y)\,dm^2(x,y)<\infty$ for all Borel sets $B\subseteq \Omega^2$ such that $m^2(B)<\infty$.
Consider
\begin{align*}
  D(\form) & := \set{u\in D(\form_0);\; \int_{\Omega\times\Omega} (u(x)-u(y))^2 j(x,y)\,dm^2(x,y) < \infty}, \\
  \form(u,v) & := \form_0(u,v) + \int_{\Omega\times\Omega} (u(x)-u(y))(v(x)-v(y)) j(x,y)\,dm^2(x,y).
\end{align*}

\begin{lemma}
  $\form$ is densely defined, accretive, continuous and closed.
\end{lemma}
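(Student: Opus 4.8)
The plan is to write $\form=\form_0+\form_j$, where $\form_j(u,v):=\int_{\Omega\times\Omega}(u(x)-u(y))(v(x)-v(y))\,j(x,y)\,dm^2(x,y)$ is the jump form, and to verify the four properties one at a time, exploiting that $\form_j$ is symmetric and nonnegative while $\form_0$ already has all four properties. \emph{Accretivity} is immediate: for $u\in D(\form)$ one has $\form(u)=\form_0(u)+\form_j(u)\geq 0$, since $\form_0$ is accretive and $\form_j(u)=\int_{\Omega\times\Omega}(u(x)-u(y))^2 j\,dm^2\geq 0$.

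\emph{Continuity.} The key observation is that $\form_j$ is the pullback of the inner product of $L_2(\Omega\times\Omega,\,j\,dm^2)$ under the map $\partial u:=\bigl((x,y)\mapsto u(x)-u(y)\bigr)$, so that $\form_j(u,v)=\dupa{\partial u}{\partial v}_{L_2(j\,dm^2)}$ and Cauchy--Schwarz gives $\abs{\form_j(u,v)}\leq \form_j(u)^{1/2}\form_j(v)^{1/2}$. Since $\form_j(u)\leq \form(u)\leq\norm{u}_\form^2$ and likewise $\norm{u}_{\form_0}\leq\norm{u}_\form$, combining this with the continuity of $\form_0$ yields $\abs{\form(u,v)}\leq (M_0+1)\norm{u}_\form\norm{v}_\form$, where $M_0$ is the continuity constant of $\form_0$.

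\emph{Closedness.} Let $(u_n)$ be $\norm{\cdot}_\form$-Cauchy. Then it is $\norm{\cdot}_{\form_0}$-Cauchy, so by closedness of $\form_0$ it converges to some $u\in D(\form_0)$ in $\norm{\cdot}_{\form_0}$, hence in $L_2(m)$. Moreover $\norm{\partial u_n-\partial u_m}_{L_2(j\,dm^2)}^2=\form_j(u_n-u_m)\leq\norm{u_n-u_m}_\form^2$, so $(\partial u_n)$ is Cauchy in $L_2(j\,dm^2)$ and converges there to some $w$. Passing to a subsequence with $u_{n_k}\to u$ $m$-a.e., we get $\partial u_{n_k}\to\partial u$ $m^2$-a.e., hence $j\,dm^2$-a.e.\ (as $j\,dm^2\ll m^2$); a further subsequence identifies $w=\partial u$. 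Thus $\form_j(u)=\norm{\partial u}_{L_2(j\,dm^2)}^2<\infty$, so $u\in D(\form)$, and $\form_j(u_n-u)\to 0$ together with $\norm{u_n-u}_{\form_0}\to 0$ gives $\norm{u_n-u}_\form\to 0$, i.e.\ $(D(\form),\norm{\cdot}_\form)$ is complete.

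\emph{Dense definedness.} This is the step I expect to be the main obstacle. Since $\form_0$ is densely defined and $D_{\fin}$ is dense in $(D(\form_0),\norm{\cdot}_{\form_0})$, it suffices to approximate each $u\in D_{\fin}$ in $L_2(m)$ by elements of $D(\form)$. For $u\in D_{\fin}$ with $B:=[u\neq 0]$ and $m(B)<\infty$, I would truncate $u$ in value to obtain bounded finite-support approximants: on the diagonal block $B\times B$ the energy $\int_{B\times B}(u(x)-u(y))^2 j\,dm^2$ is then finite by the hypothesis $\int_{B'}j\,dm^2<\infty$ for $m^2(B')<\infty$. The delicate point is the off-diagonal contribution $\int_{B\times B^c}u(x)^2\,j\,dm^2+\int_{B^c\times B}u(y)^2\,j\,dm^2$, which is \emph{not} controlled by the finite-measure integrability alone (the sets $B\times B^c$ have infinite measure); this is exactly where the further integrability assumptions on $j$ must be invoked, or where a finer approximation restricting to finite-measure pieces of $\Omega^2$ is required. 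Once $D(\form)$ is shown to be $L_2$-dense, the lemma follows by combining the four parts.
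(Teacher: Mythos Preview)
Your treatment of accretivity and continuity matches the paper's essentially verbatim. For closedness you use completeness of $L_2(j\,dm^2)$ together with a pointwise identification of the limit, whereas the paper applies Fatou's lemma directly to $\bigl((u_m-u_n)(x)-(u_m-u_n)(y)\bigr)^2 j$; the two arguments are interchangeable.

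For dense definedness you leave the argument unfinished, and the difficulty you isolate is genuine. The paper proceeds exactly along the lines you sketch: reduce to $0\le u\in D_{\fin}$ (using positivity of $S$), set $u_n:=u\wedge n\in D(\form_0)$ (using the $L_1$- or $L_\infty$-contractivity criterion for $\form_0$), note $[u_n\neq 0]=[u\neq 0]=:B$, and then assert
\[
\int_{\Omega\times\Omega}(u_n(x)-u_n(y))^2\, j\,dm^2\le(2n)^2\int_{B\times B}j\,dm^2<\infty.
\]
However, this displayed inequality silently drops the off-diagonal blocks $B\times B^c$ and $B^c\times B$ that you flag: there the integrand equals $u_n(x)^2 j(x,y)$ (respectively $u_n(y)^2 j(x,y)$), which is nonzero, and $m^2(B\times B^c)=\infty$ whenever $m(\Omega)=\infty$, so the local-integrability hypothesis on $j$ gives no control. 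Your suspicion that one needs either the later integrability hypotheses on $j$ (as in Cases~\ref{case:1} and \ref{case:2}) or an additional truncation in the space variable is well founded; the paper's argument, as written, glosses over precisely the point you raise rather than resolving it.
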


\begin{proof}
  To show that $\form$ is densely defined it suffices to approximate elements of $D_{\fin}$ by elements of $D(\form)$. Let $u\in D_{\fin}$. Since $S$ is positive without loss of generality we may assume that $u\geq 0$.
  Since $S$ is contractive in $L_1(m)$ or in $L_\infty(m)$, we have $u_n:=u\wedge n\in D(\form_0)$ for all $n\in\N$. Since $[ u_n\neq 0] =[ u\neq 0]$ for all $n\in\N$ we observe
  \[\int_{\Omega\times\Omega} (u_n(x)-u_n(y))^2 j(x,y) \, dm^2(x,y) \leq (2n)^2 \int_{[u\neq 0] \times [u\neq 0]} j(x,y)\, dm^2(x,y) <\infty \quad(n\in\N),\]
  i.e.\ $u_n\in D(\form)$ for all $n\in\N$. Since $u_n\to u$ in $L_2(m)$ the form $\form$ is densely defined.
  
  Since $j\geq 0$ we have $\form(u,u)\geq \form_0(u,u)\geq 0$ for all $u\in D(\form)$, implying that $\form$ is accretive.
  
  Since $\form_0$ is continuous there exists $M\geq 0$ such that $\abs{\form_0(u,v)}\leq M \norm{u}_{\form_0}\norm{v}_{\form_0}$ for all $u,v\in D(\form_0)$.
  Since the bilinear form $(u,v)\mapsto \int_{\Omega\times\Omega} (u(x)-u(y))(v(x)-v(y)) j(x,y)\,dm(x,y)$ is symmetric and accretive we obtain $\norm{\cdot}_{\form_0} \leq \norm{\cdot}_\form$ and by the Cauchy-Schwarz inequality
  \begin{align*}
    & \int_{\Omega\times\Omega} (u(x)-u(y))(v(x)-v(y)) j(x,y)\,dm^2(x,y) \\
    & \leq \left(\int_{\Omega\times\Omega} (u(x)-u(y))^2 j(x,y)\,dm^2(x,y)\right)^{1/2} \left(\int_{\Omega\times\Omega} (v(x)-v(y))^2 j(x,y)\,dm^2(x,y)\right)^{1/2} \\
    & \leq \norm{u}_\form \norm{v}_\form \quad(u,v\in D(\form)).
  \end{align*}
  Thus,
  \[\abs{\form(u,v)} \leq M\norm{u}_\form \norm{v}_\form + \norm{u}_\form \norm{v}_\form = (M+1)\norm{u}_\form \norm{v}_\form \quad(u,v\in D(\form)),\]
  i.e.\ $\form$ is continuous.
  
  To show closedness of $\form$ let $(u_n)_n$ be a $\norm{\cdot}_\form$-Cauchy sequence in $D(\form)$ such that $u_n\to u$ in $L_2(m)$. Without loss of generality (by choosing a suitable subsequence) we may assume that $u_n\to u$ $m$-a.e.
  Since $ \form_0\leq  \form$ the sequence $(u_n)_n$ is also a $\norm{\cdot}_{\form_0}$-Cauchy sequence. Since $\form_0$ is closed we obtain $u\in D(\form_0)$ and $\norm{u_n-u}_{\form_0}\to 0$.
  By Fatou's lemma we have
  \begin{align*}
    & \int_{\Omega\times\Omega} \bigl((u-u_n)(x) - (u-u_n)(y)\bigr)^2j(x,y)\,dm^2(x,y) \\
    & = \int_{\Omega\times\Omega} \liminf_{m\to\infty} \bigl((u_m-u_n)(x) - (u_m-u_n)(y)\bigr)^2j(x,y)\,dm^2(x,y) \\
    & \leq \liminf_{m\to\infty} \int_{\Omega\times\Omega} \bigl((u_m-u_n)(x) - (u_m-u_n(y)\bigr)^2j(x,y)\,dm^2(x,y) \\
    & \leq \liminf_{m\to\infty} \norm{u_m-u_n}_{\form}^2\to 0 \quad(n\to\infty).
  \end{align*}
  In particular, $u-u_n\in D(\form)$ and therefore $u\in D(\form)$. Furthermore,
  \[ \form(u-u_n) \leq  \form_0(u-u_n) + \liminf_{m\to\infty} \int_{\Omega\times\Omega} \bigl((u_m-u_n)(x) - (u_m-u_n(y)\bigr)^2j(x,y)\,dm^2(x,y)\to 0\]
  which implies $\norm{u-u_n}_\form\to 0$, i.e.\ $\form$ is closed. \qed
\end{proof}

Let $A$ be the operator associated with $\form$ and $T = (e^{-tA})_{t\geq 0}$ be the $C_0$-semigroup.

\begin{lemma}
  The semigroup $T$ is positive.
  If $S$ is contractive in $L_1(m)$, then $T$ is contractive in $L_1(m)$, and therefore acts on $L_q(m)$ for all $1\leq q\leq2$.
  If $S$ is contractive in $L_\infty(m)$, then $T$ is contractive in $L_\infty(m)$, and therefore acts on $L_q(m)$ for all $2\leq q<\infty$.
\end{lemma}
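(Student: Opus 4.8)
The plan is to verify the three Ouhabaz-type criteria collected in Remarks~\ref{rem:positive}, \ref{rem:Linfcontractive} and~\ref{rem:L1contractive} for the form $\form=\form_0+\form_1$, where
\[\form_1(u,v):=\int_{\Omega\times\Omega}(u(x)-u(y))(v(x)-v(y))j(x,y)\,dm^2(x,y)\]
denotes the symmetric jump part. Since $S$ is positive and, by hypothesis, contractive in $L_1(m)$ or $L_\infty(m)$, the form $\form_0$ already satisfies the respective condition (b) in those Remarks. Hence in each case it suffices to (i) check that the relevant truncation lies in $D(\form)$ and (ii) show that the \emph{additional} summand coming from $\form_1$ has the correct sign; the $\form_0$-part then supplies the rest. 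Throughout, the domain memberships follow from the fact that $t\mapsto t^+$ and $t\mapsto t\wedge 1$ are $1$-Lipschitz, so that e.g.\ $(u^+(x)-u^+(y))^2\le (u(x)-u(y))^2$ and the corresponding integral against $j$ stays finite whenever $u\in D(\form)$.

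For positivity I would invoke Remark~\ref{rem:positive}. Given $u\in D(\form)\subseteq D(\form_0)$, the Lipschitz bound gives $u^+\in D(\form)$, whence also $u^-=u^+-u\in D(\form)$. It then remains to estimate $\form(u^+,u^-)=\form_0(u^+,u^-)+\form_1(u^+,u^-)$. The first term is $\le 0$ by Remark~\ref{rem:positive} applied to $\form_0$. For the second, the elementary pointwise identity, valid for all $a,b\in\R$,
\[(a^+-b^+)(a^--b^-)=-(a^+b^-+a^-b^+)\le 0\]
(which uses $a^+a^-=b^+b^-=0$) shows that the integrand of $\form_1(u^+,u^-)$ is $\le 0$ $m^2$-a.e., and since $j\ge 0$ we obtain $\form_1(u^+,u^-)\le 0$. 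Thus $\form(u^+,u^-)\le 0$ and $T$ is positive.

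For the contractivity statements I would argue analogously. Assume $S$ is $L_\infty$-contractive and use Remark~\ref{rem:Linfcontractive}. For $u\in D(\form)$ the truncation $u\wedge 1$ lies in $D(\form)$ (again by $1$-Lipschitzness), and then $(u-1)^+=u-u\wedge 1\in D(\form)$ automatically. Writing $\form(u\wedge 1,(u-1)^+)=\form_0(u\wedge 1,(u-1)^+)+\form_1(u\wedge 1,(u-1)^+)$, the $\form_0$-term is $\ge 0$ by Remark~\ref{rem:Linfcontractive}, while for the jump part I observe that both $f(t)=t\wedge 1$ and $g(t)=(t-1)^+$ are nondecreasing, so $(f(a)-f(b))$ and $(g(a)-g(b))$ always share the same sign; hence the integrand is $\ge 0$ and $\form_1(u\wedge 1,(u-1)^+)\ge 0$. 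This yields $\form(u\wedge 1,(u-1)^+)\ge 0$ and, together with positivity, $L_\infty$-contractivity of $T$. The $L_1$-case is identical: by symmetry of $\form_1$ the integrand for $\form_1((u-1)^+,u\wedge 1)$ coincides with the one just treated and is $\ge 0$, while $\form_0((u-1)^+,u\wedge 1)\ge 0$ by Remark~\ref{rem:L1contractive}. Finally, since $\form$ is accretive, $T$ is a contraction on $L_2(m)$; interpolating this with $L_1$- (resp.\ $L_\infty$-) contractivity shows that $T$ is contractive, in particular bounded, on $L_q(m)$ for $1\le q\le 2$ (resp.\ $2\le q<\infty$), i.e.\ $T$ also acts on $L_q(m)$.

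The computations are elementary, so the only real point to get right is the bookkeeping: one must check that each truncation genuinely lies in $D(\form)$ (which is where the Lipschitz property enters) and apply each Remark with its arguments in the exact order stated, the sign of the $\form_1$-contribution being forced by the monotonicity of $t\wedge 1$ and $(t-1)^+$ (for the contractivity conditions) and by the orthogonality $a^+a^-=0$ (for positivity). I do not expect a genuine obstacle beyond this careful matching against the three criteria.
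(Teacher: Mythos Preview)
Your proof is correct and follows the same strategy as the paper: verify the Ouhabaz criteria in Remarks~\ref{rem:positive}, \ref{rem:Linfcontractive}, \ref{rem:L1contractive} by splitting $\form=\form_0+\form_1$, using the hypotheses on $S$ for the $\form_0$-part, and checking the sign of the $\form_1$-part pointwise. The only cosmetic difference is that the paper handles the sign of $\form_1(u\wedge 1,(u-1)^+)$ by partitioning $\Omega\times\Omega$ into the four regions $[u\ge 1]\times[u\ge 1]$, etc., whereas your monotonicity observation $(f(a)-f(b))(g(a)-g(b))\ge 0$ for nondecreasing $f,g$ does the same job more compactly; likewise your identity $(a^+-b^+)(a^--b^-)=-(a^+b^-+a^-b^+)$ makes explicit what the paper leaves as an assertion.
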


\begin{proof}  
  Let $u\in D(\form)$. Since $S$ is positive we have $u^+\in D(\form_0)$ and $\form_0(u^+,u^-)\leq 0$. 
  Note that $(u^+(x)-u^+(y))^2\leq (u(x)-u(y))^2$ for $m^2$-a.a.\ $(x,y)\in\Omega^2$. This implies $u^+\in D(\form)$ and 
  \[\form(u^+,u^-) = \form_0(u^+,u^-) + \int_{\Omega\times\Omega} (u^+(x)-u^+(y))(u^-(x)-u^-(y)) j(x,y)\,dm^2(x,y) \leq 0.\]
  Hence, $T$ is positive.
 
  Assume that $S$ is contractive in $L_1(m)$. Then we have $u\wedge 1 \in D(\form_0)$ and $\form_0((u-1)^+, u\wedge 1)\geq 0$.
  Let $u\in D(\form)$. Since 
  \[\Omega\times\Omega = \bigl([u\geq1] \times [u\geq 1]\bigr) \cup \bigl([u\geq1]\times [u<1]\bigr)\cup \bigl([u<1]\times[u\geq1]\bigr) \cup \bigl([u<1]\times[u<1]\bigr),\]
  an easy computation yields
  \begin{align*}
    & \form((u-1)^+, u\wedge 1) \\
    & = \form_0((u-1)^+, u\wedge 1)
    \quad + \int_{\Omega\times\Omega} \bigl((u-1)^+(x)-(u-1)^+(y)\bigr)\bigl((u\wedge 1)(x) - (u\wedge 1)(y)\bigr) j(x,y)\,dm^2(x,y) \\
    & \geq 0.
  \end{align*}
  Hence, $T$ is $L_1$-contractive. Since $T$ is $L_1$-contractive and also $L_2$-contractive, it acts on the whole scale of $L_q(m)$ with $1\leq q\leq 2$ by interpolation.

  Now, assume that $S$ is contractive in $L_\infty(m)$. Then we have $u\wedge 1 \in D(\form_0)$ and $\form_0(u\wedge 1, (u-1)^+)\geq 0$.
  As in the case of $S$ being $L_1$-contractive, we obtain
  \begin{align*}
    \form(u\wedge 1, (u-1)^+) & \geq 0 \quad(u\in D(\form)).
  \end{align*}
  Hence, $T$ is $L_\infty$-contractive. Since $T$ is $L_\infty$-contractive and also $L_2$-contractive, it acts on the whole scale of $L_q(m)$ with $2\leq q<\infty$ by interpolation. \qed
\end{proof}
Note that if $\form_0$ is symmetric then also $\form$ is symmetric and hence $A$ is self-adjoint and $T(t)$ is self-adjoint for all $t\geq 0$. 
%Thus, $T$ also then also acts on $L_q(m)$ with $1\leq q\leq 2$.

We will now focus on the case $q=1$ and $q' = \infty$, meaning that $T$ is also acting on $L_1(m)$.
We distinguish the following cases:

\begin{case}[$1\leq p<\infty$]
\label{case:1}
  Assume
  \[\int_\Omega j(x,\cdot)\,dm(x), \int j(\cdot,y)\,dm(y)\in L_\infty(m).\]
\end{case}

\begin{case}[$1\leq p\leq 2$]
\label{case:2}
  Assume
  \[\int_\Omega j(x,\cdot)\,dm(x), \int j(\cdot,y)\,dm(y)\in L_\infty(m) + L_{\frac{2p}{2-p}}(m).\]
\end{case}
% 
% \begin{case}[$1\leq p<\infty$, $2\leq q<\infty$]
% \label{case:3}
%   Assume
%   \[\int j(x,\cdot)\,dm(x), \int j(\cdot,y)\,dm(y)\in L_\infty(m) + L_{\frac{2q}{q-2}}(m).\]
% \end{case}
% 
% \begin{case}[$1\leq p\leq 2$, $2\leq q<\infty$]
% \label{case:4}
%   Assume
%   \[\int j(x,\cdot)\,dm(x), \int j(\cdot,y)\,dm(y)\in L_\infty(m) + L_{\frac{2p}{2-p}}(m) + L_{\frac{2q}{q-2}}(m) + L_{\frac{pq}{q-p}}(m).\]
% \end{case}

In either of these cases, we may further assume that $[x\mapsto \norm{j(x,\cdot)}_{\infty}], [y\mapsto \norm{j(\cdot,y)}_{\infty}]\in L_{p}(m)$.
For $0\leq u\in D(A)\cap L_q(m)$, $0\leq v\in D(A_0^\sigma)\cap L_{p'}(m)$, we compute using $j\geq 0$
\begin{align*}
  \sp{-Au}{v} - \sp{u}{-A_0^\sigma v} & = \form_0(u,v) - \form(u,v) \\
  & = -\int_{\Omega\times\Omega} (u(x)-u(y))(v(x)-v(y)) j(x,y)\,dm^2(x,y) \\
  & \leq \int_{\Omega\times\Omega} u(y)v(x)j(x,y)\, dm^2(x,y) + \int_{\Omega\times\Omega} u(x)v(y)j(x,y)\, dm^2(x,y) \\
  & = \int_\Omega v(x) \int_\Omega u(y) j(x,y)\,dm(y)\,dm(x)\\ & \quad  + \int_\Omega v(y) \int_\Omega u(x) j(x,y)\, dm(x)\, dm(y) \\
  & \leq \int_\Omega v(x) \norm{u}_{1}\norm{j(x,\cdot)}_{\infty}\,dm(x) + \int_\Omega v(y) \norm{u}_{1}\norm{j(\cdot,y)}_{\infty}\,dm(y) \\
  & = \norm{u}_{1} \dupa{[x\mapsto \norm{j(x,\cdot)}_{\infty}] + [y\mapsto \norm{j(\cdot,y)}_{\infty}]}{v}_{p,p'}.
\end{align*}

Thus, we can apply Corollary \ref{cor:strong_version} to obtain an estimate of the semigroup $T$ in terms of the semigroup $S$.
For the kernel estimates we apply Corollary \ref{cor:kernel_estimate_norm}.

\begin{theorem}
\label{thm:application_forms}
Let $(\Omega,\mathcal{F},m)$ be a $\sigma$-finite measure space. Let $\form_0, S, A_S, j, \form, T, A_T$ as above, where we assume one of the Cases \ref{case:1} or \ref{case:2} to be satisfied. Assume
$[x\mapsto \norm{j(x,\cdot)}_{\infty}], [y\mapsto \norm{j(\cdot,y)}_{\infty}]\in L_{p}(m)$.
Assume that $S$ has a kernel $k^S$ on $L_2\cap L_p(m)$. Then $T$ has a kernel $k^T$ on $L_2\cap L_1(m)$ satisfying
\[k^T(t,x,y)\leq k^S(t,x,y) + C\int_0^t \int_\Omega k^S(s,x,z)\bigl(\norm{j(z,\cdot)}_{\infty} + \norm{j(\cdot,z)}_{\infty}\bigr)\,dm(z)\,ds\]
for $m^2$-a.a.\ $(x,y)\in\Omega^2$, $t> 0$ and some $C\geq0$.
\end{theorem}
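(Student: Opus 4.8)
The proof is meant to be a direct application of Corollary \ref{cor:kernel_estimate_norm}, so the plan is mostly to line up the hypotheses. First I would set
\[ f \coloneqq [x\mapsto \norm{j(x,\cdot)}_{\infty}] + [y\mapsto \norm{j(\cdot,y)}_{\infty}], \]
which is nonnegative and, by the standing assumption of the theorem, lies in $L_p(m)$. I would record that the generators of the form-induced semigroups are $A_S = -A_0$ and $A_T = -A$, and that by the preceding lemma $T$ is positive and $L_1$-contractive; hence in the notation of Corollary \ref{cor:kernel_estimate_norm} we may take $M=1$ and $\omega = 0$, so that the exponential weight $e^{\omega(t-s)}$ equals $1$. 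Since we are in the case $q=1$, the remaining framework hypotheses (positivity on $L_2(m)$, $S$ acting on $L_p(m)$ with kernel $k^S$, and $T$ acting on $L_1(m)$) are all in force.

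Next I would observe that the inline computation carried out just before the theorem statement is exactly the generator estimate (b) of Corollary \ref{cor:kernel_estimate_norm} for this $f$, with constant $C_0 = 1$. Concretely, for $0\leq u\in D(A_T)\cap L_1(m)$ and $0\leq v'\in D(A_S^{\sigma})\cap L_{p'}(m)$ one rewrites $\dupa{A_Tu}{v'}_{2,2} - \dupa{u}{A_S^{\sigma} v'}_{2,2}$ as $\form_0(u,v') - \form(u,v')$, expands the jump part, and uses $j\geq 0$ together with $u,v'\geq 0$ to discard the nonpositive diagonal terms; Fubini and the pointwise bound $\int_\Omega u(y)\,j(x,y)\,dm(y)\leq \norm{u}_1\norm{j(x,\cdot)}_{\infty}$ then yield
\[ \dupa{A_Tu}{v'}_{2,2} \leq \dupa{u}{A_S^{\sigma}v'}_{2,2} + \norm{u}_1\,\dupa{f}{v'}_{p,p'}. \]
With hypothesis (b) thus verified, Corollary \ref{cor:kernel_estimate_norm} immediately delivers a kernel $k^T$ for $T$ on $L_2\cap L_1(m)$ together with the kernel estimate; substituting $\omega = 0$ and the explicit $f$ reproduces the displayed bound for some $C\geq 0$.

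The steps that require genuine care lie not in the final invocation but in justifying the inline computation that feeds it. I would make sure the identity $\dupa{A_Tu}{v'}_{2,2} = \form_0(u,v') - \form(u,v') + \dupa{u}{A_S^{\sigma}v'}_{2,2}$ is legitimate on the stated domains, which rests on the form representations of $A$ and $A_0$ together with the fact that $A_S^{\sigma}$ agrees with the dual of $-A_0$ (Remark \ref{rem:Engel_Nagel_dual}); and that the interchange of integration and the estimate $\int_\Omega u(y)\,j(x,y)\,dm(y)\leq\norm{u}_1\norm{j(x,\cdot)}_{\infty}$ are valid. This last point is precisely where the integrability constraints of Case \ref{case:1} or Case \ref{case:2}, and the assumption $[x\mapsto\norm{j(x,\cdot)}_{\infty}],[y\mapsto\norm{j(\cdot,y)}_{\infty}]\in L_p(m)$, enter to guarantee $f\in L_p(m)$ and finiteness of every pairing. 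Once these bookkeeping points are settled, no further estimation is needed: the existence of the kernel and the bound are read off directly from Corollary \ref{cor:kernel_estimate_norm}.
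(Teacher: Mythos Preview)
Your proposal is correct and matches the paper's approach: the paper derives the generator inequality in the inline computation preceding the theorem and then simply invokes Corollary~\ref{cor:kernel_estimate_norm} (via Corollary~\ref{cor:strong_version}) to obtain existence of $k^T$ and the kernel bound, exactly as you outline. Your additional remarks on the bookkeeping (domains, Fubini, $\omega=0$ from $L_1$-contractivity) are accurate refinements of what the paper leaves implicit.
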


\begin{remark}
  In the situation of Theorem \ref{thm:application_forms} one can even show that $C=1$. In order to prove this one needs to realize that $T$ is a contraction on $L_1(m)$, that the corresponding $C_0$ from Corollary \ref{cor:kernel_estimate_norm} equals $1$ and that
  $C$ can be computed from $C_0$ and the norm estimate of the resolvent of $A_T$ on $L_1(m)$.
\end{remark}

% \appendix
% 
% \section{A bit of measure theory}
% 
% In this appendix, we collect some facts on measure theory needed for proving the existence of a kernel for $T$ (given a kernel for $S$) and the kernel estimate in Corollary \ref{cor:kernel_estimate}.
% 
% Let $\Omega$ be a set. A subset $\mathcal{C}\subseteq \mathcal{P}(\Omega)$ of the power set of $\Omega$ is called a \emph{semiring}, provided $\varnothing\in \mathcal{C}$, for all $A,B\in\mathcal{C}$ we have $A\cap B\in \mathcal{C}$ and \ldots
% 
% \ldots is called \emph{ring} if \ldots
% 
% \ldots is called \emph{content} if \ldots
% 
% \ldots is called \emph{pre-measure} if \ldots
% 
% 
% Let $(\Omega,\mathcal{B},m)$ be a $\sigma$-finite measure space. Then
% $\mathcal{A}_{\fin} := \set{B\in\mathcal{B};\; m(B)<\infty}$ is a semiring. Hence, also $\mathcal{A}_{\fin}\times\mathcal{A}_{\fin}$ is a semiring on $\Omega\times\Omega$.
% Let $T\in L(L_2(m))$. Define $\mu\from \mathcal{A}_{\fin}\times\mathcal{A}_{\fin}\to [0,\infty]$ by
% \[\mu(E\times F):= \dupa{T\1_E}{\1_F}_{2,2}.\]
% 
% \begin{lemma}
%   $\mu$ is a content.
% \end{lemma}

\end{document}